\newtheorem{theorem}{Theorem}
\theoremstyle{plain}
\newtheorem{claim}{Claim}
\newtheorem{corollary}{Corollary}
\newtheorem{lemma}{Lemma}
\newtheorem{problem}{Problem}
\numberwithin{equation}{section}
\def\be{\begin{equation} }
\def\ee{\end{equation} }
\def\<{\langle}
\def\>{\rangle}
\begin{document}
\title[Isometric deformations of minimal surfaces in $S^{4}$]
{Isometric deformations of minimal \\surfaces in $S^{4}$}
\author{Theodoros Vlachos}
\address{Department of Mathematics, University of Ioannina, 45110 Ioannina,
Greece}
\email{tvlachos@uoi.gr}
\subjclass[2010]{Primary 53A10; Secondary 53C42}
\keywords{Minimal surfaces in $S^{4}$, isometric deformation, normal curvature function}

\begin{abstract}
We consider the isometric deformation problem for oriented non simply connected immersed minimal surfaces  $f:M \to S^{4}$.
We prove that the space of all isometric minimal  immersions of  $M$   into $S^{4}$ 
with the same normal curvature function is, within congruences, either finite or a circle.
 Furthermore, we show that 
for any compact immersed minimal surface  in $S^{4}$ with nontrivial normal bundle there are at most finitely many 
noncongruent immersed minimal surfaces in $S^{4}$ isometric to it with the same normal curvature function.
\end{abstract}

\maketitle

\section{Introduction}
A classical question about isometric  immersions is to decide if given an isometric immersion $f:M \to N$, this is, up to ambient isometries,
 the unique way of immerse  isometrically the Riemannian manifold $M$ into the Riemannian manifold $N$. When $f$ is a minimal immersion, one can ask 
if this is the unique isometric minimal immersion  of $M$ into $N$, up to ambient isometries. If this the case, $f$ is called \textit{minimally rigid}. 
The rigidity aspects of minimal hypersurfaces in a Euclidean space or in a sphere have drawn several author's attention. 
A conclusive result due to Dajczer and Gromoll \cite{DG} states that a complete minimal hypersurfaces in
 $\mathbb{R}^{n+1}$ or in $S^{n+1}, n \geq 4,$ is minimally rigid (in the Euclidean case the assumption that the hypersurface 
  does not  contain $\mathbb{R}^{n-3}$ as a factor is needed).

This result fails to hold in general for surfaces.  
However, Choi, Meeks and White \cite{CMW} proved that a properly embedded minimal surface in 
$\mathbb{R}^{3}$ with more than one end is minimally rigid. The case where the Euclidean space is replaced by a sphere is more difficult.
 A result due to Barbosa \cite{B} says that a minimally immersed
 2-sphere in a sphere is minimally rigid, while Ramanathan \cite{R} has shown that for each compact surface minimally immersed in $S^3$, 
there are only finitely many noncongruent immersed  minimal surfaces isometric to it.

It is interesting in minimal surface theory to determine whether a given minimal surface can be deformed in a nontrivial way.
The oldest known example is the deformation of the catenoid into the helicoid.  We are interested in isometric deformations of immersed minimal surfaces $f:M \to S^4$ which preserve the normal curvature function. 
If $M$ is simply connected, then there exists a smooth $2\pi$-periodic parameter deformation
 $f_{\theta}$ of $f$,  the \textit{associated family},
through  isometric  minimal immersions with the same normal curvature function. The procedure is to rotate the second fundamental form of $f$ by $\theta$
  and then integrate the system of Gauss, Codazzi and Ricci equations (cf. \cite{TG}). The members of this family are
 noncongruent, unless $f$ is superminimal. We recall that superminimal surfaces are the minimal surfaces whose Hopf differential vanishes  identically, 
or equivalently, 
the curvature ellipse is a circle at each point. 

Thus the rigidity for simply connected minimal surfaces fails in a natural way, and consequently the rigidity problem for minimal surfaces
has a global nature. The above  procedure cannot be carried out in general
 in the presence of nontrivial fundamental group $\pi_1(M)$. In this case, the obstruction is described as a homeomorphism from $\pi_1(M)$
 to the isometry group 
$\text{Isom}(S^4)$  of  $S^4$.

Our aim is to study the space of isometric minimal immersions of $M$ into $S^4$ with  the same normal curvature function. 
For any connected minimal surface, not necessarily compact, we have the following result that was  inspired by a recent paper due to 
 Smyth and Tinaglia \cite{ST}.

\begin{theorem}\label{Th1}
 Let $f:M \to S^4$ be an isometric minimal immersion of an oriented connected 2-dimensional Riemannian manifold
 $M$ into $S^4$ with normal curvature function $K_{N}$. Then, within congruences, the space of all isometric minimal immersions of $M$ into $S^4$
 with the same normal curvature function $K_{N}$
 is either finite or a circle.
\end{theorem}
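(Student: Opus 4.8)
The strategy, following Smyth and Tinaglia \cite{ST}, is to pass to the universal cover, classify the isometric minimal immersions there, and then decide which of them descend, by a monodromy-and-analyticity argument. Let $\pi\colon\widetilde M\to M$ be the universal covering with deck group $\Gamma=\pi_{1}(M)$, and equip $\widetilde M$ with the pulled-back metric. Any isometric minimal immersion $g\colon M\to S^{4}$ with normal curvature function $K_{N}$ lifts to $\widetilde g=g\circ\pi$, an isometric minimal immersion of $\widetilde M$ with the same metric and normal curvature function. On $\widetilde M$ I would first prove that, within congruences, the isometric minimal immersions with normal curvature $K_{N}$ form either a single point, when $f$ is superminimal, or the associated family $\{\widetilde f_{\theta}\}_{\theta\in\mathbb R/2\pi\mathbb Z}$, a circle. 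For this, use the splitting $N_{\mathbb C}=N^{1,0}\oplus N^{0,1}$ of the complexified normal bundle and write $\alpha^{2,0}=\Phi+\Psi$ accordingly, so that $\Phi$ and $\Psi$ are sections of line bundles over $\widetilde M$. The Codazzi equation makes $\Phi$ and $\Psi$ holomorphic for the Koszul--Malgrange structures induced by $\nabla^{\perp}$, the Gauss equation expresses $|\Phi|^{2}+|\Psi|^{2}$ through the metric, and the Ricci equation expresses $|\Phi|^{2}-|\Psi|^{2}$ through $K_{N}$ and the metric; hence $|\Phi|^{2}$ and $|\Psi|^{2}$ are determined by the prescribed data. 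Given another such immersion $h$, I would gauge its normal bundle so that its normal connection becomes $\nabla^{\perp}$ — legitimate because over the simply connected $\widetilde M$ any two metric connections with the same curvature two-form are gauge equivalent, and because a gauge transformation of the normal bundle does not alter the congruence class of an immersion, by the uniqueness in the fundamental theorem of submanifolds. Then the Hopf data $\Phi',\Psi'$ of $h$ are holomorphic sections of the same line bundles with $|\Phi'|=|\Phi|$ and $|\Psi'|=|\Psi|$, so either one of $\Phi,\Psi$ vanishes identically, which is exactly the superminimal case and forces $h$ congruent to $\widetilde f$, or neither does and, by the maximum principle, $\Phi'=e^{ic}\Phi$ and $\Psi'=e^{ic'}\Psi$ for constants $c,c'$.

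The apparent two-parameter freedom $(c,c')$ collapses. Writing $c=\theta+s$ and $c'=\theta-s$, the replacement $\Phi\mapsto e^{is}\Phi$, $\Psi\mapsto e^{-is}\Psi$ with the normal connection unchanged is precisely the action of the constant rotation of the normal plane by $s$, a gauge transformation, so it does not change the congruence class; the remaining common factor $e^{i\theta}$ is the associated-family parameter. Hence $h$ is congruent to $\widetilde f_{\theta}$ with $\theta=(c+c')/2$, and since the members of the associated family are pairwise noncongruent unless $f$ is superminimal, the moduli on $\widetilde M$ form a circle $X=\mathbb R/2\pi\mathbb Z$, or a point in the superminimal case. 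This already proves Theorem~\ref{Th1} when $M$ is simply connected.

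In general, define $\Theta$ from the set of isometric minimal immersions of $M$ with normal curvature $K_{N}$, modulo congruence, into $X$ by letting $\Theta([g])$ be the unique $\theta$ for which $\widetilde g$ is congruent to $\widetilde f_{\theta}$; this is well defined by the previous step and injective, since a congruence of $S^{4}$ taking $g_{1}$ to $g_{2}$ also takes $\widetilde g_{1}$ to $\widetilde g_{2}$. It remains to identify the image. The Gauss--Codazzi--Ricci data of $\widetilde f_{\theta}$ is $\Gamma$-invariant, so by the fundamental theorem, for each $\gamma\in\Gamma$ there is an isometry $\rho_{\theta}(\gamma)\in\text{Isom}(S^{4})$ with $\widetilde f_{\theta}\circ\gamma=\rho_{\theta}(\gamma)\circ\widetilde f_{\theta}$, and $\widetilde f_{\theta}$ descends to an immersion of $M$, so that $\theta$ lies in the image of $\Theta$, precisely when every $\rho_{\theta}(\gamma)$ can be taken to be the identity. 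Fixing a base point and an adapted frame, $\rho_{\theta}(\gamma)$ is computed by integrating the frame equations of $\widetilde f_{\theta}$ along a fixed loop representing $\gamma$, and these equations depend real-analytically on $\theta$, since only the factor $e^{i\theta}$ multiplying $\alpha^{2,0}$ varies; as $\Gamma$ is finitely generated, the image of $\Theta$ is the zero set in the circle $X$ of a real-analytic map. Such a set is either finite or all of $X$, and since $\Theta$ is a bijection onto its image, the space of isometric minimal immersions of $M$ with the same normal curvature function is, within congruences, either finite or a circle.

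The heart of the argument is the classification on $\widetilde M$, and in particular the reduction in the second paragraph of the a priori two-dimensional family of isometric minimal immersions sharing the metric and $K_{N}$ to the one-dimensional associated family, the extra direction being an invisible gauge rotation of the normal bundle; setting up the bookkeeping of the fundamental theorem of submanifolds correctly — which data is intrinsic, which is gauge, which is a congruence — is the delicate point. After that the dichotomy is routine, resting only on the fact that a real-analytic map on the circle that is not identically zero has finite zero set.
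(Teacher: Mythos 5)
Your proposal is correct and follows essentially the same route as the paper: lift to the universal cover, identify the isometric minimal immersions there with the associated family (the paper simply cites Eschenburg--Tribuzy for this classification, which you re-derive via the splitting of $B^{(2,0)}$ and a gauge argument), encode the obstruction to descent in the monodromy homomorphism $\theta\mapsto\varPhi_{\theta}(\sigma)\in\mathrm{Isom}(S^4)$, and use real-analytic dependence on $\theta$ (the paper cites Eschenburg--Quast, you argue via analytic parameter dependence of the frame ODEs) to conclude that the set of descending parameters is either finite or the whole circle. One small remark: since $M$ need not be compact, $\pi_1(M)$ need not be finitely generated, but this is harmless because the common zero set of an arbitrary family of real-analytic functions on the circle is still either finite or all of the circle.
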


The main result deals with the number of isometric minimal immersions of compact surfaces with the same normal curvature function. 
We prove that, within this kind of deformations, 
 compact  immersed minimal surfaces in $S^4$ with nontrivial normal bundle are minimally rigid up to finiteness.

\begin{theorem} \label{Th2}
 Let $f:M \to S^4$ be an isometric minimal  immersion of a compact oriented 2-dimensional Riemannian manifold
 $M$ into $S^4$ with nontrivial normal bundle and normal curvature function $K_{N}$.
 Then there exist at most finitely many 
noncongruent minimal immersions of $M$ into $S^4$ with the same normal curvature function $K_{N}$.
\end{theorem}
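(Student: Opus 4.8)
The plan is to use Theorem~\ref{Th1} to reduce everything to ruling out the circle alternative, and then to show that this alternative cannot occur when $e(\nu)\neq 0$. First I would analyze what freedom is left in the second fundamental form $\alpha$ once the metric and $K_{N}$ are fixed: via the Gauss equation $K=1-\tfrac12|\alpha|^{2}$ the metric determines $|\alpha|^{2}$ pointwise, and via the Ricci equation $K_{N}$ determines the ``determinant'' of the two shape operators; together these pin down, at each point, the moduli of the two Hopf differentials, leaving only a pair of phases. One phase is a rotation of the normal frame (a gauge, changing nothing), and the other is precisely the parameter of the associated family $f_{\theta}$, obtained by rotating $\alpha$ by $R_{\theta}$ in each tangent plane: this multiplies both Hopf differentials by $e^{2i\theta}$ and, by a direct check with the Gauss, Codazzi and Ricci equations, preserves the metric, the normal bundle and its connection, hence $K_{N}$. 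When $M$ is compact, holomorphicity of the Hopf differentials forces these phases to be constant, so the space in Theorem~\ref{Th2} is, up to congruence, either finite or the circle $\{f_{\theta}\}$; moreover in the superminimal case (one Hopf differential vanishing identically) rotating $\alpha$ amounts to a normal gauge transformation, so the space is actually finite. Thus it suffices to treat the non-superminimal case and to prove that the associated family cannot be globally defined on $M$ for every $\theta$.

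Next I would set up the period obstruction of the associated family. On the universal cover $\tilde M$ one integrates the rotated structure equations to obtain $\tilde f_{\theta}:\tilde M\to S^{4}$ for all $\theta$; for each deck transformation $\gamma$ the immersions $\tilde f_{\theta}$ and $\tilde f_{\theta}\circ\gamma$ share first fundamental form, normal connection and (rotated) second fundamental form, so they differ by an ambient isometry, yielding a homomorphism $\rho_{\theta}:\pi_{1}(M)\to\mathrm{Isom}(S^{4})=O(5)$ with $\rho_{0}=\mathrm{id}$, depending real-analytically on $\theta$. If the space is the circle, then each $f_{\theta}$ descends to $M$, so $\rho_{\theta}\equiv\mathrm{id}$ for all $\theta$; in particular its first variation $\xi:=\tfrac{d}{d\theta}\big|_{0}\rho_{\theta}\in\mathrm{Hom}(H_{1}(M;\mathbb{Z}),\mathfrak{so}(5))$ vanishes. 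Writing $F=(f,f_{*}e_{1},f_{*}e_{2},e_{3},e_{4}):\tilde M\to SO(5)$, $dF=F\,\Omega$, and $\dot\Omega:=\tfrac{d}{d\theta}\big|_{0}\Omega_{\theta}$ (which is supported on the tangent--normal block and is linear in the shape operators), one checks that $\beta:=\mathrm{Ad}(F)\,\dot\Omega$ is closed, is $\pi_{1}(M)$-invariant, hence descends to a closed $\mathfrak{so}(5)$-valued $1$-form on $M$, and that $\xi$ is exactly the homomorphism of periods of $\beta$. Consequently the circle alternative forces $\beta$ to be \emph{exact} on $M$.

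Finally I would reach a contradiction from $e(\nu)\neq 0$ by computing the characteristic integral $\int_{M}[\beta\wedge\beta]\in\mathfrak{so}(5)$, which vanishes whenever $\beta$ is exact, since $[d\Phi\wedge d\Phi]=d[\Phi,d\Phi]$ and $M$ is closed. Using the explicit form of $\dot\Omega$ together with the Gauss equation $K=1-\tfrac12|\alpha|^{2}\le 1$ and the Ricci equation, the bracket collapses (up to normalization) to $[\beta\wedge\beta]=8(1-K)\,dA\,(e_{1}\wedge e_{2})-8K_{N}\,dA\,(e_{3}\wedge e_{4})$, where $e_{1}\wedge e_{2}$ and $e_{3}\wedge e_{4}$ are the tangent and normal Gauss maps into $\Lambda^{2}\mathbb{R}^{5}=\mathfrak{so}(5)$; the normal part of this vector is governed by the Gauss--Bonnet-type identity $\int_{M}K_{N}\,dA=2\pi e(\nu)$, so when $e(\nu)\neq 0$ the integral cannot vanish, contradicting the exactness of $\beta$. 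The hard part is precisely this last step: extracting from $\int_{M}[\beta\wedge\beta]$ (equivalently, from a suitable period of $\beta$) a quantity that is visibly and unavoidably nonzero once $e(\nu)\neq 0$ — this is where the topological hypothesis on the normal bundle enters, and where the interplay between the sign-definite term $1-K\ge 0$ and the normal Gauss--Bonnet identity must be handled carefully; the superminimal surfaces excluded in the first step require no such argument, since there the associated family is trivial.
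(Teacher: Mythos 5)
Your first two steps (reduction via Theorem \ref{Th1} to ruling out $\mathcal{S}(f)=[0,2\pi]$, and the monodromy homomorphism $\rho_\theta:\pi_1(M)\to\mathrm{Isom}(S^4)$ with the closed $\mathfrak{so}(5)$-valued $1$-form $\beta=\mathrm{Ad}(F)\dot\Omega$ whose periods compute $\tfrac{d}{d\theta}\big|_0\rho_\theta$) are sound and consistent with the paper's setup. The gap is exactly where you locate it, and it is not a technicality: the claimed contradiction from $\chi(Nf)\neq 0$ does not follow. Even granting the pointwise identity $[\beta\wedge\beta]=c_1(1-K)\,dA\,(f_*e_1\wedge f_*e_2)+c_2K_N\,dA\,(e_3\wedge e_4)$, the bivectors $f_*e_1\wedge f_*e_2$ and $e_3\wedge e_4$ are the (varying) tangent and normal Gauss maps, not fixed elements of $\mathfrak{so}(5)\cong\Lambda^2\mathbb{R}^5$. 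Hence $\int_M K_N\,(e_3\wedge e_4)\,dA$ is in no way ``governed by'' $\int_M K_N\,dA=2\pi\chi(Nf)$: it is a vector-valued integral that can vanish (for instance by equivariance under an ambient symmetry group) even when $\chi(Nf)\neq 0$, and it can moreover be cancelled against the tangential term, since pointwise orthogonality of the two blocks is lost after integration. There is a second, structural worry: your argument discards all but the first-order information at $\theta=0$ (exactness of $\beta$, i.e.\ vanishing of one period homomorphism), so even a correct evaluation of $\int_M[\beta\wedge\beta]$ would have to be nonzero for \emph{every} compact non-superminimal minimal surface with nontrivial normal bundle --- a much stronger, purely local-to-global first-order rigidity statement that nothing in your sketch supports.

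For comparison, the paper's proof does not work at the linearized level at all. Assuming $\mathcal{S}(f)=[0,2\pi]$, it shows that the coordinate functions of the whole family $f_\theta$ are linearly independent, contradicting the finite dimensionality of the eigenspace of the Laplacian for the eigenvalue $2$. The heart is a Claim: a relation $\sum_j\langle f_{\theta_j},v_j\rangle=0$ forces all $v_j=0$. This is proved by showing that the globally defined function $\psi^2(1-\mu^2/\kappa^2)$ built from $\psi=\sum_j e^{i\theta_j}\langle e_4^{\theta_j},v_j\rangle$ is holomorphic and bounded, hence constant; the constant is killed by analyzing the zeros of $B^{(2,0)}$ at the circle points of the curvature ellipse (via the factorization $B^{(2,0)}=z^mB^{*(2,0)}$ and the zero-counting Lemma \ref{x}, which is where $\chi(Nf)\neq 0$ enters), Sakaki's theorem on the Ricci condition is used to rule out $\omega_{34}(E)\equiv 0$, and an induction on the number of terms finishes the argument. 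None of these ingredients has a counterpart in your proposal, and the step you defer (``the hard part'') is precisely the content that the paper's machinery exists to supply; as written, the proposal does not prove the theorem.
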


As an application, we show that  if a compact minimal surface allows a one parameter group of
 intrinsic isometries that preserve the normal curvature function,
 then these isometries extend to extrinsic ones.
\begin{corollary}
 Let $f:M \to S^4$ be an isometric minimal immersion of a compact oriented 2-dimensional Riemannian manifold
 $M$ into $S^4$ with nontrivial normal bundle. Suppose that $M$ 
admits a one parameter group of isometries $\varphi_t:M \to M, t \in \mathbb{R},$  that preserve the orientation and the normal curvature function. 
Then there exists a one parameter group of isometries $\tau_t:S^4 \to S^4$ such that $f\circ\varphi_t=\tau_t \circ f$ for all $t \in \mathbb{R}.$
\end{corollary}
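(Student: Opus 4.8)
The plan is to feed the one parameter family $f_t:=f\circ\varphi_t$ into Theorem \ref{Th2} and exploit connectedness of $\mathbb{R}$. First I would record that each $f_t$ is again an isometric minimal immersion of $M$ into $S^4$ with the same normal curvature function: since $\varphi_t$ is an isometry of the domain, $f_t$ is an isometric immersion and its mean curvature vector vanishes because $f$ is minimal, so $f_t$ is minimal; and because $\varphi_t$ preserves the orientation, the normal curvature function of $f_t$ at a point $p$ equals $K_N\circ\varphi_t(p)$, which is $K_N(p)$ by hypothesis. (The orientation hypothesis is exactly what prevents a sign from entering here.) Thus $\{f_t\}_{t\in\mathbb{R}}$ is a family of isometric minimal immersions of $M$ into $S^4$ all sharing the normal curvature function $K_N$.

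By Theorem \ref{Th2} these immersions fall into only finitely many congruence classes. Now $t\mapsto f_t$ is continuous (indeed smooth) in the $C^\infty$ topology on immersions, since $\{\varphi_t\}$ is the flow of a Killing field on $M$; and the quotient of the space of immersions by the post-composition action of $\mathrm{Isom}(S^4)=O(5)$ is $T_1$, because $O(5)$ is compact and hence its orbits are closed. A finite $T_1$ space is discrete, so the induced map $t\mapsto[f_t]$ is a continuous map of the connected space $\mathbb{R}$ into a discrete space, hence constant; as $f_0=f$ we get $[f_t]=[f]$ for all $t$. In other words, for each $t$ there is an isometry $\tau_t\in\mathrm{Isom}(S^4)$ with $f\circ\varphi_t=\tau_t\circ f$.

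It remains to see that $\tau_t$ is unique and that $\{\tau_t\}$ is a one parameter group. For uniqueness I would use that the normal bundle of $f$ is nontrivial to conclude that $f$ is substantial, i.e.\ $f(M)$ is not contained in any totally geodesic $S^2$ or $S^3$ of $S^4$: a minimal surface contained in a totally geodesic $S^2$ is an open subset of that $S^2$, whose normal bundle in $S^4$ is trivial, while a surface with oriented $M$ contained in a totally geodesic $S^3$ has an orientable, hence trivial, normal line bundle in $S^3$ and therefore trivial normal bundle in $S^4$. Consequently $f(M)$ linearly spans $\mathbb{R}^5$, so an element of $O(5)$ fixing $f(M)$ pointwise is the identity; if $\tau_t\circ f=\tau'_t\circ f$ then $(\tau'_t)^{-1}\tau_t$ fixes $f(M)$ pointwise, whence $\tau_t=\tau'_t$. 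From $\varphi_{s+t}=\varphi_s\circ\varphi_t$ and $\varphi_0=\mathrm{id}$ one then gets $\tau_{s+t}\circ f=\tau_s\circ\tau_t\circ f$ and $\tau_0\circ f=f$, so by uniqueness $\tau_{s+t}=\tau_s\circ\tau_t$ and $\tau_0=\mathrm{id}$; continuity of $t\mapsto\tau_t$ follows from that of $t\mapsto f_t$ together with the fact that $\tau_t$ is pinned down by its values on a spanning subset of $\mathbb{R}^5$. I expect the only delicate point to be the continuity-into-a-discrete-set step of the second paragraph — making precise that the finiteness provided by Theorem \ref{Th2} is genuine discreteness for a topology in which $t\mapsto f\circ\varphi_t$ varies continuously; all the real deformation-theoretic content has already been spent in Theorems \ref{Th1} and \ref{Th2}, and the rest is bookkeeping.
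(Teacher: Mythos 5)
Your proof is correct and takes essentially the same route as the paper: feed the family $f_t=f\circ\varphi_t$ into Theorem \ref{Th2} and use continuity in $t$ together with the finiteness of congruence classes to conclude that every $f_t$ is congruent to $f=f_0$. You additionally make precise the ``continuous map into a finite, hence discrete, set'' step and supply the uniqueness, group law and continuity of $\tau_t$ via substantiality of $f$ (forced by the nontrivial normal bundle), details the paper's proof leaves implicit.
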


The paper is organized as follows: In section 2, we fix the notation and
give the local theory of minimal surfaces in $S^4$. In section 3, we discuss the associated family of simply connected minimal surfaces.
 In section 4,  we deal with the deformability of not necessarily simply connected minimal surfaces and prove Theorem 1. 
Finally, in the last section we give the proof of the main result, namely Theorem 2.

\section{Local theory of minimal surfaces in $S^4$}
Let $f:M\rightarrow S^{4}$ be an immersed minimal surface, i.e., an isometric minimal immersion
of a connected oriented $2$-dimensional Riemannian manifold $M$,
 with  normal bundle $Nf$ and second fundamental form $B^f=B$.

Let $\{e_{j}\}$ be a local orthonormal frame field on $S^{4}$, which agrees with the orientation of $TS^{4}$, and let $\{\omega
_{j}\}$ be the coframe dual to $\{e_{j}\}$. The structure equations of $S^{4}$ are 
\begin{eqnarray}\label{structure}
d\omega _{j} &=&\sum_{k}\omega _{jk}\wedge \omega _{k}, \\
d\omega _{jk} &=&\sum_{l}\omega _{jl}\wedge \omega _{lk}-\omega _{j}\wedge
\omega _{k},
\end{eqnarray}%
where the connection form $\omega _{jk}$ is given by $\omega
_{jk}(X)=\langle \tilde{\nabla }_{X}e_{j},e_{k}\rangle $, $\tilde{\nabla}$ is the Levi-Civit\'{a} connection with
respect to  the standard metric $\langle
.,.\rangle $ of $S^{4}$. We choose the frame such
that, restricted to $M$, $e_{1}$ and $e_{2}$ are tangent and agree with the given orientation  of $M$ and consequently $e_{3 }$ and $e_{4}$
are normal to the surface. Then we have $\omega _{\alpha }=0$. By (2.1) and
Cartan's Lemma, we get 
\begin{equation*}
\omega _{j\alpha }=\sum_{k}h_{jk}^{\alpha }\omega _{k},{\ }h_{jk}^{\alpha
}=h_{kj}^{\alpha }, {\ }{\ } j,k \in \{1,2\}, {\ }{\ } \alpha \in \{3,4\}.
\end{equation*}%
The assumption that $f$ is minimal is equivalent to $h_{11}^{\alpha
}+h_{22}^{\alpha }=0$. Restricting equations (2.1) and (2.2) to $M,$ we get
the Cartan structure equations of $f.$

We may view $M$ as a Riemann surface 
with complex structure determined as usual by the metric and the orientation. 
The complexified tangent bundle $TM\otimes \mathbb{C}$ is decomposed into
the eigenspaces of the complex structure $J$, called $T^{\prime }M$ and $%
T^{\prime \prime }M$, corresponding to the eigenvalues $i$ and $-i$. 
The second fundamental form $B$, which takes values in $Nf$, can
be complex linearly extended to $TM\otimes \mathbb{C}$ with values in the
complexified vector bundle $Nf\otimes \mathbb{C}$ and then decomposed
into its $(p,q)$-components, $p+q=2,$ which are tensor products of $p$ many 
1-forms vanishing on $T^{\prime \prime }M$ and $q$ many 1-forms vanishing on $%
T^{\prime }M$. The  minimality of $f$
implies that  the $(1,1)$-component of $B$
vanishes. Consequently, for a local complex
coordinate $z=x+iy$, we have the following decomposition 
\begin{equation*}
B=B^{(2,0)}+B^{(0,2)},
\end{equation*}
where 
\begin{equation*}
B^{(2,0)}=B\big(\dfrac{\partial}{\partial z},\dfrac{\partial}{\partial z} \big)dz^2,{\ }{\ } B^{(0,2)}=\overline{B^{(2,0)}} {\ }{\ }{\hbox {and}}{\ }{\ }
\dfrac{\partial}{\partial z} ={\frac{1}{2}}\big({\frac{\partial }{\partial x}}-i{\frac{\partial 
}{\partial y}}\big ).
\end{equation*}

The Hopf differential is  by definition the differential form of type $(4,0)$ 
\begin{equation*}
\Phi:=\langle B^{(2,0)},B^{(2,0)}\rangle.
\end{equation*}

The components of the second fundamental form are given by 
\begin{equation*}
h_{1}^{\alpha }:=h_{11}^{\alpha}=\langle
B(e_{1},e_{1}),e_{\alpha }\rangle, {\ } h_{2}^{\alpha }:=h_{12}^{\alpha}=\langle
B(e_{1},e_2),e_{\alpha }\rangle,
\end{equation*}%
where $\alpha =3$ or $4$. We use complex
vectors, and we put 
\begin{equation*}
H_{\alpha }=h_{1}^{\alpha }+ih_{2}^{\alpha },\text{ }E=e_{1}-ie_{2}{\ \ }
\text{and }\varphi =\omega _{1}+i\omega _{2}.
\end{equation*}

If we choose a local complex coordinate $z=x+iy$ such that $\varphi =\rho dz,$ for some smooth real function $\rho$,
then  we easily get 
\begin{equation*}
\Phi ={\frac{1}{4}}\big( {\overline{H}_{3}^{2}}+{\overline{H}%
_{4}^{2}} \big) \varphi ^{4}.
\end{equation*}

The \textit{curvature ellipse},  for each point $%
p$ in $M$, is the subset of the normal space $N_{p}f$ given by 
\begin{equation*}
\mathcal{E} (p)=\big\{B(X,X):X\in T_{p}M,|X|=1%
\big\}.
\end{equation*}%
It is known that $\mathcal{E}(p)$ is indeed an
ellipse (possibly degenerated). The zeros of $\Phi $ are precisely the points where 
the curvature ellipse is a circle. The minimal surface is called \textit{superminimal} if $\Phi$
 is identically zero, or equivalently, if the curvature ellipse is a circle at each point.

It is a consequence of the structure equations that the Hopf differential is holomorphic (cf. \cite{Ch,ChW}). 
Hence either a minimal surface is superminimal, or the points where
the curvature ellipse is a circle are isolated.

The \textit{normal curvature} function $K_{N}$ (cf. \cite{A}), which depends on the orientation of the normal bundle $Nf$,
is defined by the equation
\begin{equation*}
 d\omega_{34}=-K_{N}\omega_1 \wedge \omega_2,
\end{equation*}
or equivalently,
\begin{equation}\label{normalcur}
K_{N}=i\left( H_{3}{\overline{H}_{4}}-{\overline{H}_{3}}%
H_{4}\right).
\end{equation}
It is not hard to verify that
\begin{equation*}
|K_{N}|={\frac{2}{\pi }}{\hbox {Area}}(\mathcal{E}).
\end{equation*}%
 Let $\kappa \geq \mu
\geq 0$ be the length of the semi-axes of the curvature ellipse. Then%
\begin{equation*}
|K_{N}|=2\kappa \mu.
\end{equation*}%
The length of the second fundamental form is given by%
\begin{equation}\label{lengthB}
\left\Vert B\right\Vert ^{2}=2\big(|{H_{3}}|^{2}+|{H_{4}}|^{2}%
\big).
\end{equation}%

\smallskip

\begin{lemma}\label{local theory}
  Assume that  $f:M\rightarrow S^{4}$ is not superminimal and let $M_1$ be the set of isolated points where the curvature ellipse is a circle.
 Around each  point in $M\smallsetminus M_1$, there exist a local complex coordinate $(U,z)$ with
 $U  \subset     M\smallsetminus M_1$ and 
orthonormal frames  $\{e_1, e_2\}$ in $TM|U$,
 $\{e_3, e_4\}$ in $Nf|U$ which agree with the given orientations such that 

(i) the Riemmannian metric of $M$ is given by 
$$ds^2=\dfrac{|dz|^2}{(\kappa_1^2 - \mu_1^2)^{1/2}} {\ }{\ }{\ }  \text{and}  {\ }{\ }{\ }
\frac{\partial}{\partial z} = { E \over 2(\kappa_1^2 - \mu_1^2)^{1/4}},$$

(ii) 
  $e_3$ \text{and} $e_4$ give respectively the directions of the major 
and the minor axes of the curvature ellipse, and

(iii)
 $H_3  ={\kappa_1}, H_4= i {\mu_1 }$, where $\kappa_1$ and $\mu_1$ are smooth real valued functions with 
$$\kappa=|{\kappa_1}|, {\ }  \mu=|{\mu_1 }|.$$ 

Moreover, the connection and the normal connection forms,  with respect to this frame, are given by
\begin{equation}\label{connection}
 \omega_{12}= -\frac{1}{4}*d\log\ ({\kappa}_1^2 -{\mu }_1^2 ),
 {\ }\omega_{34} = *\frac{{\kappa_1} d{\mu_1 }-{\mu_1 } d{\kappa_1}}{{\kappa}_1^2 -{\mu }_1^2},
\end{equation}
where $*$ stands for the Hodge operator.
\end{lemma}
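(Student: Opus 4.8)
The plan is to work at a point $p \in M \smallsetminus M_1$, where $\kappa > \mu \geq 0$ strictly, so the curvature ellipse is a genuine (nondegenerate) ellipse with well-defined major and minor axes. First I would use the fact that the semi-axes depend smoothly on $p$ (they are determined by the eigenvalues of a symmetric bilinear form that varies smoothly) to choose smoothly on a neighborhood $U$ an oriented orthonormal frame $\{e_3,e_4\}$ of $Nf|U$ with $e_3$ along the major axis and $e_4$ along the minor axis; this gives (ii). The curvature ellipse is the image of the unit circle under $X \mapsto B(X,X)$, and for a minimal surface this map has the form $X \mapsto \cos(2t)\,B(e_1,e_1) + \sin(2t)\,B(e_1,e_2)$ when $X = \cos t\, e_1 + \sin t\, e_2$; hence the semi-axes of $\mathcal E$ are realized by $B(e_1,e_1)$ and $B(e_1,e_2)$ for a suitable choice of tangent frame rotation. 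So I would next rotate $\{e_1,e_2\}$ in $TM|U$ so that $B(e_1,e_1)$ points along $e_3$ and $B(e_1,e_2)$ along $e_4$; then $h_1^3 = \pm\kappa$, $h_1^4 = 0$, $h_2^3 = 0$, $h_2^4 = \pm\mu$ after possibly a further discrete adjustment, giving real smooth functions $\kappa_1, \mu_1$ with $|\kappa_1| = \kappa$, $|\mu_1| = \mu$ and $H_3 = h_1^3 + i h_2^3 = \kappa_1$, $H_4 = h_1^4 + i h_2^4 = i\mu_1$. That is (iii); note $\Phi = \frac14(\overline{H}_3^2 + \overline{H}_4^2)\varphi^4 = \frac14(\kappa_1^2 - \mu_1^2)\varphi^4$, which is nonzero precisely on $M \smallsetminus M_1$, consistent with the claimed metric.

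Having fixed the frames, I would then produce the complex coordinate. The adapted tangent frame $\{e_1,e_2\}$ need not be a coordinate frame, but its dual coframe $\{\omega_1,\omega_2\}$ is a smooth oriented coframe on the oriented surface $U$; by the standard existence of isothermal coordinates one may (after possibly shrinking $U$) write $\varphi = \omega_1 + i\omega_2 = \rho\, e^{i\psi} dz$ for some local complex coordinate $z$ and smooth real $\rho > 0$, $\psi$. The freedom to rotate $\{e_1,e_2\}$ by a (pointwise) angle was already spent in aligning with the ellipse axes, so I cannot in general also force $\psi \equiv 0$; instead the claim must be that $\rho = (\kappa_1^2 - \mu_1^2)^{-1/2}$, i.e.\ the conformal factor is pinned down by the holomorphicity of $\Phi$. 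Here is the key computation: $\Phi$ is a holomorphic $4$-form, so $\Phi = h(z)\,dz^4$ with $h$ holomorphic; comparing with $\Phi = \frac14(\kappa_1^2 - \mu_1^2)\varphi^4 = \frac14(\kappa_1^2-\mu_1^2)\rho^4 e^{4i\psi} dz^4$ forces $(\kappa_1^2 - \mu_1^2)\rho^4 e^{4i\psi}$ to be (four times) a holomorphic function. By absorbing a fourth root of $h$ into a new choice of coordinate $w$ (replacing $z$ by $w$ with $dw^4 = h\,dz^4$, legitimate away from the isolated zeros of $h$, which lie in $M_1$ anyway), one arranges $\Phi = \frac14 dz^4$ exactly, whence $(\kappa_1^2 - \mu_1^2)\rho^4 e^{4i\psi} = 1$; since $\kappa_1^2 - \mu_1^2 > 0$ and $\rho > 0$ this gives $e^{4i\psi} = 1$ and then $\rho^4 = (\kappa_1^2 - \mu_1^2)^{-1}$, i.e.\ $\rho = (\kappa_1^2-\mu_1^2)^{-1/4}$. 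This yields $ds^2 = |\varphi|^2 = \rho^2 |dz|^2 = (\kappa_1^2 - \mu_1^2)^{-1/2}|dz|^2$ and $\partial/\partial z = \rho^{-1}\,\overline{E}/2$... — more precisely, from $E = e_1 - ie_2$ dual to $\varphi = \rho\,dz$ one reads off $\partial/\partial z = E/(2\rho^{-1}) = E/(2(\kappa_1^2-\mu_1^2)^{-1/4})$... I would be careful here with the dual pairing, but this is exactly (i).

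Finally, for the connection forms in (iv), I would simply compute. With $\omega_{1\alpha} = \sum_k h_k^\alpha \omega_k$ and the values $h_1^3 = \kappa_1$, $h_2^3 = 0$ (up to sign), $h_1^4 = 0$, $h_2^4 = \mu_1$, the structure equation $d\omega_1 = \omega_{12}\wedge\omega_2$ together with $d\omega_1 = d(\rho\,dx) = \rho_y\,dy\wedge dx$ etc.\ (using $\varphi = \rho\,dz$) determines $\omega_{12}$; since $\rho = (\kappa_1^2-\mu_1^2)^{-1/4}$, one gets $\omega_{12} = -\tfrac14 * d\log(\kappa_1^2 - \mu_1^2)$ after identifying the Levi-Civita connection form of the conformal metric $\rho^2|dz|^2$ with $-* d\log\rho = -\tfrac14 * (-d\log(\kappa_1^2-\mu_1^2))$ — sign bookkeeping is needed but routine. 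For $\omega_{34}$: differentiate the defining relations $\omega_{j4} = \langle \tilde\nabla e_j, e_4\rangle$ or, more efficiently, use the Codazzi-type equations for $H_3, H_4$ in this frame, which read (after the minimality reduction) as first-order relations linking $d\kappa_1$, $d\mu_1$, $\omega_{12}$ and $\omega_{34}$; solving for $\omega_{34}$ gives $\omega_{34} = *\,(\kappa_1\,d\mu_1 - \mu_1\,d\kappa_1)/(\kappa_1^2 - \mu_1^2)$.

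The main obstacle I anticipate is \emph{not} any single computation but the bookkeeping of the various sign/discrete ambiguities: aligning $\{e_3,e_4\}$ with the axes fixes them only up to sign, aligning $\{e_1,e_2\}$ with $B(e_1,e_1), B(e_1,e_2)$ likewise, and the fourth-root coordinate change $dw^4 = h\,dz^4$ is fourfold ambiguous; one must check these ambiguities can be used up consistently so that \emph{all} of (i)–(iv) hold simultaneously with the stated signs (in particular that $H_3 = +\kappa_1$ and $H_4 = +i\mu_1$ can be achieved together rather than, say, $H_4 = -i\mu_1$). The holomorphicity of $\Phi$, quoted from \cite{Ch,ChW} in the excerpt, is exactly the structural input that makes the conformal factor $\rho$ rigidly equal to $(\kappa_1^2-\mu_1^2)^{-1/4}$ rather than an arbitrary positive function, so it is the crux of part (i) and should be invoked explicitly.
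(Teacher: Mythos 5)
Your proposal is correct in substance, and for parts (ii)--(iii) it is essentially the paper's argument: on a neighborhood where the ellipse is not a circle, take $\{e_3,e_4\}$ along the major/minor axes and rotate $\{e_1,e_2\}$ so that $B(e_1,e_1)$ and $B(e_1,e_2)$ point along $e_3$ and $e_4$; this is exactly the paper's normalization $h_2^3=h_1^4=0$, $\kappa_1=h_1^3$, $\mu_1=h_2^4$. Where you genuinely diverge is part (i). You obtain the coordinate by invoking the holomorphicity of the Hopf differential: in an isothermal coordinate $\Phi=\frac14(\kappa_1^2-\mu_1^2)\rho^4e^{4i\psi}\,dz^4$, and a holomorphic fourth-root change of coordinate (legitimate on $U\subset M\smallsetminus M_1$, where $\Phi\neq0$) normalizes $\Phi=\frac14\,dz^4$, forcing $e^{4i\psi}=1$ (removable by rotating $z$ by a fourth root of unity, which preserves the normalization) and $\rho=(\kappa_1^2-\mu_1^2)^{-1/4}$; note this uses $\kappa_1^2-\mu_1^2=\kappa^2-\mu^2>0$, i.e.\ that $e_3$ was put along the \emph{major} axis. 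The paper instead stays entirely inside the structure equations: it takes coordinates $(u,v)$ with $e_1\parallel\partial_u$, $e_2\parallel\partial_v$, writes the Codazzi equations for $\omega_{13},\omega_{23},\omega_{14},\omega_{24}$, eliminates $\omega_{34}(e_1),\omega_{34}(e_2)$ to conclude that $r_1^2(\kappa_1^2-\mu_1^2)^{1/2}$ depends only on $u$ and $r_2^2(\kappa_1^2-\mu_1^2)^{1/2}$ only on $v$, and then rescales $u$ and $v$ separately to build $z$. Your route is shorter and conceptually transparent (holomorphicity of $\Phi$, quoted in Section 2 from Chern and Chern--Wolfson, is precisely the Codazzi information in packaged form), and it automatically aligns the coordinate with the chosen frame; the paper's route is self-contained and its intermediate Codazzi identities are reused verbatim to derive the formulas for $\omega_{12}$ and $\omega_{34}$, which you would compute by the same kind of calculation anyway.

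Two bookkeeping points of the type you flagged, for the record: with the paper's conventions ($d\omega_1=\omega_{12}\wedge\omega_2$, $*\omega_1=\omega_2$) the Levi-Civit\'a form of $\rho^2|dz|^2$ is $\omega_{12}=+\,*d\log\rho$, which with $\rho=(\kappa_1^2-\mu_1^2)^{-1/4}$ gives exactly the stated $-\frac14*d\log(\kappa_1^2-\mu_1^2)$, whereas your $-\,*d\log\rho$ has the opposite sign; and the dual pairing gives $\partial/\partial z=\frac{\rho}{2}E=E/\big(2(\kappa_1^2-\mu_1^2)^{1/4}\big)$, not $E/\big(2(\kappa_1^2-\mu_1^2)^{-1/4}\big)$. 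Neither affects the soundness of the plan.
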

\begin{proof}
 Assume that  the curvature ellipse  is not a
circle at a point $x \in M$. By continuity the same property holds on an 
 open set $U_1$ around $x$. Let $\{e_{3}, e_{4}\}$
be an arbitrary  oriented   orthonormal frame in $N f|U_1$.
 We  introduce Cartesian coordinates $(x_1,x_2)$ in
 each fiber of $Nf|U_1$
adapted to this frame. In view of 
$$B(e_1,e_1)= h_1^{3}e_{3}+h_1^{4}e_{4}   
{\ }{\ } \text{and} {\ }{\ } B(e_1, e_2)= h_2^{3}e_{3}+h_2^{4}e_{4},$$
 we  deduce that the quadratic equation of  the curvature ellipse is given by
\begin{eqnarray*}
\lefteqn{ |H_{4}|^2  x_1^2 -
 2{\hbox {Re}} (H_{3} \overline{H}_{4}   )x_1x_2
 + |H_{3}|^2   x_2^2 } \\
& & = |H_{3}|^2 |H_{4}|^2 - \big({\hbox {Re}}
 (H_{3} \overline{H}_{4}  ) \big)^2.
\end{eqnarray*}
  On $U_1$
we now choose the frame so that $e_{3}$ and $ e_{4}$ give the directions of the major and minor
 axes respectively (cf. \cite{AFR}). This means that the coefficient of $x_1x_2$ above
must vanish, i.e.,
 $H_{3} \overline{H}_{4} $ is imaginary. 
It is clear that the length of the semi-axes of  the curvature ellipse are
given by ${\kappa}  =  |H_{3} |{\ }
{\hbox {and}} {\ }  {\mu}= |H_{4} |$. Since ${\kappa}  =  |H_{3} |>0$ on $U_1$, we may choose $e_1$ and $ e_2$ so that $h_2^{3}=0$.
Then the fact that $H_{3} \overline{H}_{4} $ is imaginary implies that $h_1^{4}=0$. We set $\kappa_1:=h_1^3$ and $\mu_1:=h_2^4$. 

We consider local coordinates $(u,v)$ on a neighborhood $U\subset U_1$ of $x$ such that
\begin{equation*}
e_1 =
\frac{1}{r_1}\frac{\partial}{\partial u} {\ }{\ } \text{and} {\ }{\ } e_2 =
\frac{1}{r_2}\frac{\partial}{\partial v}.
\end{equation*}
 Then $\omega _1 = r_1 du$ and
 $\omega _2 = r_2 dv$. From (2.1) we find that
 $$\omega _{12} = -{(r_1) _v \over  r_2}du -{(r_2) _u \over r_1 }dv.$$
 Taking the exterior derivative of 
$$\omega _{13} = \kappa_1 r_1 du, {\ }
  \omega _{23} =- \kappa_1 r_2 dv, {\ } \omega _{14} = \mu_1 r_2 dv, 
{\ } \omega _{24} = \mu_1 r_1 du$$
 and using the structure equations (2.2), we obtain
$$2\kappa_1 (r_1) _v +(\kappa_1)_v r_1 + \mu_1 r_1 r_2 \omega _{34}(e_1) =0,$$
$$2\kappa_1 (r_2) _u +(\kappa_1)_u r_2 - \mu_1 r_1 r_2 \omega _{34}(e_2) =0,$$
$$2\mu_1 (r_2) _u +(\mu_1)_u r_2 - \kappa_1 r_1 r_2 \omega _{34}(e_2) =0,$$
$$2\mu_1 (r_1) _v +(\mu_1)_v r_1 + \kappa_1 r_1 r_2 \omega _{34}(e_1) =0.$$
Eliminating $\omega _{34}(e_1)$ and $\omega _{34}(e_2)$, we get
$$ 2(\kappa_1^2 - \mu_1^2)(r_2)_u +\big(\kappa_1 (\kappa_1)_u- \mu _1 (\mu _1)_u \big)r_2 = 0,$$
$$ 2(\kappa_1^2 - \mu_1^2)(r_1)_v +\big(\kappa_1 (\kappa_1)_v-
\mu _1 (\mu _1)_v \big)r_1 = 0,$$
and so
  $r_1 ^2 (\kappa_1^2 - \mu_1^2)^{1/2}$ depends only on  $u$ and
   $r_2 ^2 (\kappa_1^2 - \mu_1^2)^{1/2}$   depends only on  $v$. Then we
   introduce the complex coordinate $z=x+iy$ given by
   $$x = \int r_1 (\kappa_1^2 - \mu_1^2)^{1/4}  du {\ }{\ } {\hbox {and}} {\ }{\ }
  y = \int r_2  (\kappa_1^2 - \mu_1^2)^{1/4}  dv.$$
  Now it is easy to verify that
  $$ds^2 = { |dz|^2\over  (\kappa_1^2 - \mu_1^2)^{1/4}}   {\ }{\ }
 {\hbox {and}}
 {\ }{\ }  E = 2(\kappa_1^2 - \mu_1^2)^{1/4}\dfrac{\partial}{\partial z}.$$
This completes the proof of parts (i)-(iii).

Taking the exterior derivatives of 
$$\omega_{13}=\kappa_1\omega_1, {\ }\omega_{23}=-\kappa_1\omega_1, {\ }\omega_{14}=\mu_1\omega_2,{\ } \omega_{24}=\mu_1\omega_1$$ 
and using the structure equations, we obtain (\ref{connection}).
\end{proof}

\subsection{The splitting of the Hopf differential}
Using the null frame field 
$$\eta=e_{3}+ie_{4}, {\ } \bar  \eta =e_{3}-ie_{4}$$
 of the complexified normal bundle
$Nf\otimes \mathbb{C}$, we have 
\begin{equation*}
 \langle B^{(2,0)},B^{(2,0)}\rangle =
\langle B^{(2,0)},\eta\rangle \langle B^{(2,0)}, \bar{\eta} \rangle.
\end{equation*}
Therefore, from the definition of the Hopf differential, we get 
\begin{equation*}
\Phi =\dfrac{1}{4}\big( {\overline{H}_{3}^{2}}+{\overline{H}%
_{4}^{2}}\big) \varphi ^{4}=\dfrac{1}{4}k_{+}k_{-}\varphi^{4},
\end{equation*}
where 
\begin{equation*}
k_{\pm}:= {\overline{H}_{3}} \pm i{\overline{H}%
_{4}}.
\end{equation*}

The functions 
$$a_{\pm}:= |k_{\pm}|$$
are globally well-defined. Their geometric meaning is that they both determine the geometry of the  curvature ellipse. 
Indeed, since the Gaussian curvature $K$ of $M$ is given by 
$$K=1-\dfrac{1}{2}\left\Vert B\right\Vert ^{2},$$
 it follows from (\ref{normalcur}) and (\ref{lengthB}) that 
\begin{equation*}
a_{\pm}=(1-K \pm K_N)^{1/2}= \kappa {\pm}  \varepsilon\mu ,
\end{equation*}
where $\varepsilon= \pm 1$, according to  weather $K_N \geq 0$ or  $K_N \leq 0$.

We use the above mentioned notation throughout the paper.

\section{Associated family of minimal surfaces in $S^4$}

Let  $f:M \to S^4$ be an isometric minimal immersion of a simply connected oriented 2-dimensional Riemannian manifold $M$ 
with second fundamental form $B$ and normal bundle $Nf$. 
For each $\theta \in S^1=\mathbb{R}/2\pi \mathbb{Z}$, we consider the orthogonal and parallel tensor field 
$$J_{\theta}=\cos\theta I + \sin \theta J, $$
where  $I$ is the identity map and $J$ is the complex structure determined by the orientation and the metric of $M$. 
It is easy to see that the symmetric section $\varGamma_{\theta}$ of the homomorphism bundle $\text{Hom}(TM\times TM,Nf)$ given by 
$$\varGamma_{\theta}(X,Y):=B(J_{\theta}X,Y),$$
where $X$ and $Y$ are tangent to $M$, satisfies the Gauss, Codazzi and Ricci equations with respect to the normal connection $\nabla^{\perp}$ 
of $Nf$ (cf. \cite{TG}). Hence 
there exist 
an isometric immersion $f_{\theta}:M \to S^4$ and a  vector bundle isomorphism 
$$T_{\theta}:Nf \to Nf_{\theta},$$ 
which is parallel and orthogonal,
 such that 
$$B^{f_{\theta}}(X,Y)=T_{\theta}(B(J_{\theta}X,Y))$$ 
for all $X$ and $Y,$ where $B^{f_{\theta}}$ is the second fundamental form of $f_{\theta}$. Obviously, $f_{\theta}$ is also minimal.
The $2\pi$-periodic family $f_{\theta}$ is the \textit{associated family} of $f$. The members of the associated family are
  noncongruent, unless $f$ is superminimal and so each $f_{\theta}$ is congruent to $f$ (cf. \cite{ET}).

The normal curvature function of $f_{\theta}$
 coincides with the normal curvature function of $f$, where the orientation of $Nf_{\theta}$  is naturally induced from the
 orientation on $Nf$ and the bundle isomorphism $T_{\theta}$. Indeed, for  a local orthonormal frame $\{e_3, e_4\}$ of $Nf$,
 we consider the  frame of $Nf_{\theta}$ given by
\begin{equation*}
 e^{\theta}_{3}:=T_{\theta}(e_3), {\ }{\ } e^{\theta}_{4}:=T_{\theta}(e_4).
\end{equation*}
 Then it is easy to see that $H_3 , H_4$ and the corresponding functions $H^{\theta}_{3}, H^{\theta}_{4}$ for $f_{\theta}$
are related by
\begin{equation*}
 H^{\theta}_{3}=\exp (-2i\theta) H_3 {\ }{\ } \text{and} {\ }{\ } H^{\theta}_{4}=\exp(-2i\theta) H_4.
\end{equation*}
From these, by virtue of (\ref{normalcur}), it follows that $f$ and $f_{\theta}$ have the same normal curvature function.

Actually the associated family is the only way to isometrically deform any simply connected immersed minimal
surface in $S^4$ preserving the normal curvature function. This has already been proved by Eschenburg and Tribuzy in \cite{ET0}.

In order to state their result, we recall the notion of absolute
value type functions introduced in \cite{EGT,ET0}. A smooth complex valued function $u$ defined on a connected 
oriented surface $M$  is 
called of \textit{holomorphic type}  if locally $u=u_{0}u_1,$ where $u_{0}$ is holomorphic and $u_{1}$ is  smooth without zeros.
  A function $a:M\rightarrow
\lbrack 0,+\infty )$  on  $M$ is called of \textit{%
absolute value type}  if there is a function $u$  of holomorphic type on $M$  such that $a=|u|$. The zero set 
of such a function is either isolated
 or the whole of $M$, and outside its zeros the function is smooth.

\begin{theorem}\label{ET0} \cite{ET0}
Let $f:M\rightarrow S^{4}$ be an immersed minimal surface which is not nonsuperminimal with Gaussian curvature $K$ and normal curvature function $K_{N}$.
 Then  the functions
 $a_{\pm}= (1-K \pm K_N)^{1/2}$ are of absolute value type and satisfy 
\begin{equation}\label{Laplace}
\Delta \log a_{\pm} = 2K \mp K_N, 
\end{equation}%
where $\varDelta$ stands for the Laplace operator of $M$. Furthermore, if $M$ is simply connected, then
any other minimal immersion of $M$ into $S^4$ having the same normal curvature function $K_N$ is congruent to some $f_{\theta}$.
\end{theorem}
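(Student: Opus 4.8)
The plan is to deduce both assertions from the splitting $B^{(2,0)}=B^{(2,0)}_{+}+B^{(2,0)}_{-}$ of the $(2,0)$-part of the second fundamental form, the analogue for $B$ of the splitting $\Phi=\tfrac14 k_{+}k_{-}\varphi^{4}$ of the Hopf differential. First I would note that, by the Ricci equation $d\omega_{34}=-K_{N}\,\omega_{1}\wedge\omega_{2}$, the metric connection $\nabla^{\perp}$ on the oriented $2$-plane bundle $Nf$ commutes with the fibrewise rotation $J^{\perp}$ by $\pi/2$, so $Nf\otimes\mathbb{C}$ decomposes $\nabla^{\perp}$-parallel into the $\mp i$-eigenbundles $L_{\pm}$ of $J^{\perp}$; these are conjugate complex line bundles, locally spanned by $\bar\eta$ and $\eta$, and the $(0,1)$-part of $\nabla^{\perp}$ makes each of them a holomorphic line bundle. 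Since minimality annihilates the $(1,1)$-component of $B$, the tensor $B^{(2,0)}$ is a section of $K_{M}^{\otimes 2}\otimes(L_{+}\oplus L_{-})$ ($K_{M}$ the canonical bundle of $M$) with components $B^{(2,0)}_{\pm}$, and a computation with the Codazzi equation — which in the constant-curvature ambient $S^{4}$ says that $\nabla^{\perp}B$ is totally symmetric — shows, exactly as for the holomorphicity of $\Phi$, that each $B^{(2,0)}_{\pm}$ is a \emph{holomorphic} section of $K_{M}^{\otimes 2}\otimes L_{\pm}$. Because $|B^{(2,0)}_{\pm}|$ equals $a_{\pm}$ up to a positive smooth factor and $a_{\pm}\not\equiv 0$ as $f$ is not superminimal, $B^{(2,0)}_{\pm}$ is, in any local holomorphic frame, a holomorphic function times a nowhere-zero smooth factor; that is precisely the statement that $a_{\pm}$ is of absolute value type, with isolated zeros.

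The Laplace equation follows from the classical identity which computes $\Delta\log\|s\|$, for a holomorphic section $s$ of a Hermitian holomorphic line bundle $\mathcal{L}$ over a Riemann surface, as minus the Chern curvature of $\mathcal{L}$ divided by the area form, valid away from the zeros of $s$. Applied to $\mathcal{L}=K_{M}^{\otimes 2}\otimes L_{\pm}$, with the Gauss equation $K=1-\tfrac12\|B\|^{2}$ governing the curvature of $K_{M}^{\otimes 2}$ and the Ricci equation governing that of $L_{\pm}$, this gives $\Delta\log a_{\pm}=2K\mp K_{N}$ off the zeros; since near a zero the singular part of $\log a_{\pm}$ is the real part of a logarithm of a holomorphic function, hence harmonic, one obtains the equation as stated.

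For the rigidity assertion, assume $M$ simply connected and let $\tilde f:M\to S^{4}$ be a minimal immersion inducing the same metric and having the same normal curvature function $K_{N}$. By the Gauss equation $\tilde f$ has $\tilde K=K$ and $\|\tilde B\|^{2}=\|B\|^{2}=a_{+}^{2}+a_{-}^{2}$, hence $\tilde a_{\pm}=(1-K\pm K_{N})^{1/2}=a_{\pm}$; in particular $\tilde f$ is not superminimal. Here simple connectedness enters: the connections on $L_{\pm}$ and on $\tilde L_{\pm}$ have equal curvature (determined by $K_{N}$ via the Ricci equation), so there exist parallel unitary isomorphisms $\Psi_{\pm}:L_{\pm}\to\tilde L_{\pm}$, unique up to a constant unimodular factor, which we choose conjugate to one another so that $\Psi:=\Psi_{+}\oplus\Psi_{-}$ restricts to a parallel isometry $Nf\to N\tilde f$; being built from the $(0,1)$-parts of the connections, the $\Psi_{\pm}$ are holomorphic. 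Then $\Psi_{\pm}^{-1}\!\circ\tilde B^{(2,0)}_{\pm}$ and $B^{(2,0)}_{\pm}$ are holomorphic sections of the \emph{same} line bundle $K_{M}^{\otimes 2}\otimes L_{\pm}$ with the same norm $a_{\pm}$; their quotient is a holomorphic function on $M$ — the coinciding orders of the zeros of numerator and denominator cancel the apparent poles — of constant modulus one, hence a constant $c_{\pm}$ with $|c_{\pm}|=1$, so that $\Psi_{\pm}^{-1}\!\circ\tilde B^{(2,0)}_{\pm}=c_{\pm}B^{(2,0)}_{\pm}$. Choosing $\alpha\in\mathbb{R}$ with $e^{2i\alpha}=c_{-}/c_{+}$ and $\theta$ with $e^{i\theta}=e^{i\alpha}c_{+}=e^{-i\alpha}c_{-}$, the parallel rotation $\Theta$ of $Nf$ by the constant angle $\alpha$ sends $c_{+}B^{(2,0)}_{+}+c_{-}B^{(2,0)}_{-}$ to $e^{i\theta}B^{(2,0)}$, so $\Lambda:=\Psi\circ\Theta^{-1}$ is a parallel isometry $Nf\to N\tilde f$ with $\Lambda(e^{i\theta}B^{(2,0)})=\tilde B^{(2,0)}$. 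Since $J_{\theta}\,\partial/\partial z=e^{i\theta}\,\partial/\partial z$, the $(2,0)$-part of $B^{f_{\theta}}$ equals $e^{i\theta}T_{\theta}(B^{(2,0)})$; as the $(1,1)$-part of $B$ vanishes and its $(0,2)$-part is the conjugate of its $(2,0)$-part, the parallel isometry $\Lambda\circ T_{\theta}^{-1}:Nf_{\theta}\to N\tilde f$ identifies the full second fundamental forms and the normal connections of $f_{\theta}$ and $\tilde f$. Since these immersions also induce the same metric, the fundamental theorem of submanifolds of $S^{4}$ yields an ambient isometry carrying $f_{\theta}$ onto $\tilde f$.

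The main obstacles I anticipate are, first, extracting the holomorphicity of $B^{(2,0)}_{\pm}$ cleanly from the Codazzi equation and handling the sign bookkeeping in the curvature computation that produces $\Delta\log a_{\pm}=2K\mp K_{N}$; and, above all, the implication ``equal norms $\Rightarrow\ \tilde B^{(2,0)}_{\pm}=c_{\pm}B^{(2,0)}_{\pm}$'', which is exactly where simple connectedness is indispensable — to trivialise the difference of the two normal line-bundle connections — and where the behaviour at the common zeros of $a_{\pm}$ must be controlled. The observation that the two a priori independent phases $c_{+}$ and $c_{-}$ are always reabsorbed by a single rotation of the normal bundle is what forces the space of such deformations to be the one-parameter associated family rather than anything larger.
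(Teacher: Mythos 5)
This statement is not proved in the paper at all: it is quoted, with attribution, from Eschenburg--Tribuzy \cite{ET0} (the ``not nonsuperminimal'' in the statement is a typo for ``nonsuperminimal''), so there is no in-paper argument to compare against; your proposal has to be judged on its own, and it is essentially a correct reconstruction along the lines of the cited source. Your route --- split $Nf\otimes\mathbb{C}$ into the parallel eigenbundles $L_{\pm}$ of the normal rotation, note via Codazzi that the two components of $B^{(2,0)}$ are holomorphic sections of $K_{M}^{\otimes 2}\otimes L_{\pm}$ whose pointwise norms are conformal multiples of $a_{\pm}$ (this is exactly the splitting $\Phi=\tfrac14 k_{+}k_{-}\varphi^{4}$ of Section 2), read off the absolute-value-type property and the Laplace identities from the curvature of these line bundles, and for uniqueness use simple connectedness to produce parallel unitary isomorphisms $L_{\pm}\to\tilde L_{\pm}$ (possible because both connections have curvature determined by the common $K_{N}$), compare holomorphic sections of equal norm to get unimodular constants $c_{\pm}$, and absorb the two phases into one constant normal rotation plus one associated-family parameter before applying the uniqueness part of the fundamental theorem --- is sound, and the quotient argument at the common zeros is handled correctly. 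Three small caveats. First, your phase convention (the $(2,0)$-part of $B^{f_{\theta}}$ picks up $e^{i\theta}$) differs from the paper's stated relation $H^{\theta}_{\alpha}=e^{-2i\theta}H_{\alpha}$ by a reparametrization of the circle; this is harmless since $\theta$ runs over all of $S^{1}$, but fix one convention. Second, the identity $\Delta\log a_{\pm}=2K\mp K_{N}$ holds pointwise only off the isolated zero set; distributionally the zeros contribute point masses, which is precisely what yields $\int_{M}\Delta\log a\,dA=-2\pi N(a)$ used later in the paper, so your remark that the singular part is ``harmonic'' should be phrased in that sense rather than as an assertion that the equation extends across the zeros. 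Third, the sign bookkeeping matching $a_{+}$ with the correct eigenbundle and producing $\mp K_{N}$, which you flag yourself, does need to be carried out, but it only affects labels, not the structure of the argument.
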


\section{Isometric deformations of minimal surfaces in $S^4$   preserving the normal curvature function}

We consider the following 
\begin{problem}
Given an immersed minimal surface $f:M\rightarrow S^{4}$  with normal curvature function $K^f_N=K_N$, 
 describe the space of all isometric minimal immersions of $M$ into $S^4$ with  the  same normal curvature function $K_N$. 
\end{problem}

From the holomorphicity of the Hopf differential we know that either  $f$ is superminimal or the curvature ellipse is a circle at isolated points only.
The answer to the above problem is already known in the case where $f$ is superminimal, 
since superminimal surfaces are rigid among superminimal surfaces (cf. \cite{J,V}).

Hereafter we assume that $f$ is not superminimal.  Let $g:M\rightarrow S^{4}$ be another immersed
 minimal surface with  the same normal curvature $K_N$. 
If $M$ is not simply connected, then we consider the Riemannian covering map  $p: \tilde{M} \to M$, $\tilde{M}$ being the universal cover of $M$ equipped 
with the metric and the orientation that makes $p$ an orientation preserving local isometry. Then the  immersed minimal surfaces
 $\tilde{f}:=f\circ p$ 
and $\tilde{g}:=g\circ p$ have the same normal curvature $\tilde K_N=K_N\circ p$. 
The orientation on each bundle $N\tilde{f}$ and $N\tilde{g}$ is naturally
induced from that of $Nf$ and $Ng$, respectively. According to Theorem \ref{ET0}, $\tilde{g}$ is congruent to some $\tilde{f}_{\theta}$
in the associated family of $\tilde{f}$. Now the question is whether $\tilde{f}_{\theta}$ projects to an isometric minimal immersion
 $f_{\theta}:M\rightarrow S^{4}$. 

Hence the study of the space of all isometric minimal immersions of $M$ into $S^4$ with the same normal curvature $K_N$ 
is reduced to the study of the set
\begin{equation*}
\mathcal{S}(f):=\left\{\theta \in [0,2\pi]:{\ } \text{there exists}{\ }  f_{\theta}:M\rightarrow S^{4} {\ } \text{so that} {\ }
 \tilde{f}_{\theta}= f_{\theta}\circ p \right\}. 
\end{equation*}
Obviously $0 \in \mathcal{S}(f)$. Moreover, for each $\theta \in \mathcal{S}(f)$, $f$ and ${f}_{\theta}$ have the same normal curvature,
where  the orientation of the normal bundle of $f_{\theta}$ 
is inherited in a natural way from the orientation of $N\tilde{f}_{\theta}$. Indeed, for any $x\in M,$ the normal curvature 
$K_N^{{f}_{\theta}}$ of ${f}_{\theta}$ is given by 
\begin{eqnarray*}
 K_N^{{f}_{\theta}}(x)&=& K_N^{{f}_{\theta}}\circ p(\tilde x)=K_N^{{f}_{\theta}\circ p}(\tilde x)
= K_{N}^{{\tilde f}_{\theta}}(\tilde x)\\&=&K_N^{\tilde f}(\tilde x)=K_N^{f}\circ p(\tilde x)=K_N(x),
\end{eqnarray*}
where $\tilde x \in p^{-1}(x).$

\begin{lemma}\label{deck}
 For any $\sigma$ in the group  $\mathcal{D}$ of deck transformations of the covering map $p: \tilde{M} \to M$, 
 the minimal immersions $\tilde{f}_{\theta}$ and
  $\tilde{f}_{\theta}\circ \sigma$ are congruent.
\end{lemma}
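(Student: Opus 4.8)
The plan is to exploit the fact that $\sigma$ is an orientation-preserving isometry of $\tilde M$, and that both $\tilde f$ and $\tilde f\circ\sigma$ are isometric minimal immersions with the same normal curvature function. First I would observe that the associated family construction is natural with respect to isometries: since $\sigma^*$ commutes with the complex structure $J$ (as $\sigma$ preserves orientation and the metric) and with the parallel tensor field $J_\theta=\cos\theta\, I+\sin\theta\, J$, pulling back the deformed data of $\tilde f_\theta$ by $\sigma$ produces precisely the analogous deformed data of $\tilde f\circ\sigma$. Concretely, the second fundamental form of $\tilde f\circ\sigma$ at $X,Y$ is (via $d\sigma$) identified with $B^{\tilde f}(d\sigma X, d\sigma Y)$, and rotating it by $J_\theta$ gives $B^{\tilde f}(J_\theta d\sigma X, d\sigma Y)=B^{\tilde f}(d\sigma (J_\theta X), d\sigma Y)$; so the rotated second fundamental form of $\tilde f\circ\sigma$ corresponds, through $d\sigma$ and the bundle isomorphism $T_\theta$, to that of $\tilde f_\theta$.

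Next I would invoke the uniqueness (fundamental theorem of submanifolds) direction: since $\tilde M$ is simply connected and both $(\tilde f_\theta)\circ\sigma$ and $(\tilde f\circ\sigma)_\theta$ are isometric immersions of $\tilde M$ into $S^4$ with the same induced metric, the same second fundamental form (under the identification just described), and the same normal connection, there is an ambient isometry of $S^4$ carrying one to the other. In other words, $(\tilde f\circ\sigma)_\theta$ is congruent to $\tilde f_\theta\circ\sigma$. Combining with Theorem \ref{ET0} applied to $\tilde f\circ\sigma$ — which says that any minimal immersion of $\tilde M$ with the same normal curvature as $\tilde f\circ\sigma$ is congruent to a member of the associated family of $\tilde f\circ\sigma$ — together with the fact that $\tilde f\circ\sigma$ is itself congruent to $\tilde f$ (being the same map precomposed with a deck transformation, hence with the same image), one concludes $\tilde f_\theta\circ\sigma$ is congruent to $(\tilde f\circ\sigma)_\theta$, which lies in the associated family of $\tilde f$, and a parameter-counting argument using the holomorphic Hopf differential pins down that the relevant parameter is exactly $\theta$.

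Alternatively, and perhaps more cleanly, I would argue directly: $\tilde f_\theta\circ\sigma$ and $\tilde f_\theta$ are two isometric minimal immersions of the simply connected $\tilde M$ into $S^4$, and I claim they have the same normal curvature function. This follows because $\sigma$ is an orientation-preserving isometry and $K_N^{\tilde f}$ descends to $K_N$ on $M$, so $K_N^{\tilde f}\circ\sigma = K_N^{\tilde f}$; and since $K_N^{\tilde f_\theta}=K_N^{\tilde f}$ (shown in Section 3), we get $K_N^{\tilde f_\theta\circ\sigma}=K_N^{\tilde f_\theta}\circ\sigma=K_N^{\tilde f}\circ\sigma=K_N^{\tilde f}=K_N^{\tilde f_\theta}$. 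By Theorem \ref{ET0}, $\tilde f_\theta\circ\sigma$ is then congruent to some member $\tilde f_\psi$ of the associated family of $\tilde f$. It remains to identify $\psi=\theta$: I would compare Hopf differentials, noting that rotating the second fundamental form by $J_\theta$ multiplies $H_3,H_4$ by $\exp(-2i\theta)$ hence the Hopf differential $\Phi$ by $\exp(-8i\theta)$ — wait, more carefully $\Phi$ picks up a phase $\exp(4i\theta)$ under conjugation of the $\overline H_\alpha^2$ terms — while precomposition by the holomorphic isometry $\sigma$ pulls $\Phi$ back without changing its "phase class" relative to the canonical one. Matching these phases, and using that the map $\theta\mapsto f_\theta$ is injective on the non-superminimal locus modulo the period, forces $\psi=\theta$.

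The main obstacle will be the bookkeeping of orientations and phases in this last identification step: one must be careful that $\sigma$ preserves the orientations of both $TM$ and the normal bundle (the latter is where the normal-curvature sign convention enters), and that the bundle isomorphism $T_\theta$ together with $d\sigma$ interacts correctly with the chosen frames, so that the phase accumulated by $H_3,H_4$ is genuinely $\exp(-2i\theta)$ and not $\exp(-2i(\theta+c))$ for some shift $c$ depending on $\sigma$. Once the naturality of the associated-family construction under orientation-preserving isometries is established — essentially the statement that $J_\theta$ is intrinsic and $\sigma_*J_\theta = J_\theta\sigma_*$ — the congruence $\tilde f_\theta\circ\sigma\cong\tilde f_\theta$ is immediate, and I expect the proof to be short.
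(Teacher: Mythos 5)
Your first paragraph does contain the paper's actual mechanism --- compare the fundamental data of $\tilde f_\theta\circ\sigma$ and $\tilde f_\theta$ through $d\sigma$ and the bundle isomorphism $T_\theta$, using $d\sigma\circ\tilde J_\theta=\tilde J_\theta\circ d\sigma$, and then invoke the uniqueness part of the fundamental theorem of submanifolds on the simply connected $\tilde M$ --- but as written the argument does not close, for two concrete reasons. First, you never use the key identity $\tilde f\circ\sigma=\tilde f$: since $p\circ\sigma=p$ for a deck transformation, $\tilde f\circ\sigma=f\circ p\circ\sigma$ equals $\tilde f$ on the nose, not merely ``congruent with the same image''. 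With this equality, the second fundamental form of $\tilde f_\theta\circ\sigma$ at $\tilde x$ is $T_\theta|_{\sigma(\tilde x)}\bigl(B^{\tilde f}|_{\tilde x}(\tilde J_\theta\,\cdot\,,\cdot)\bigr)$, i.e.\ exactly the data of $\tilde f_\theta$ transported by $\mathit{\Sigma}_\theta|_{\tilde x}:=T_\theta|_{\sigma(\tilde x)}\circ T_\theta^{-1}|_{\tilde x}$, and no parameter identification is needed at all. Because you only keep ``same image'', you are driven to your closing step of pinning down the parameter via the Hopf differential, and that step is genuinely insufficient: under $H_\alpha\mapsto e^{-2i\theta}H_\alpha$ the Hopf differential transforms as $\Phi_\theta=e^{4i\theta}\Phi$, which is $\pi/2$-periodic in $\theta$, so matching Hopf differentials determines the parameter only modulo $\pi/2$; since for a non-superminimal surface the members $f_\theta$, $f_{\theta+\pi/2}$, $f_{\theta+\pi}$, $f_{\theta+3\pi/2}$ are pairwise noncongruent yet share the same $\Phi$, this cannot single out $\psi=\theta$. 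Your ``cleaner'' second route (same $K_N$, then Theorem~\ref{ET0}, then Hopf-differential matching) founders on the same point.

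Second, to apply the rigidity direction of the fundamental theorem you need the normal-bundle identification to be parallel (and orthogonal), not merely to intertwine the second fundamental forms; you assert ``the same normal connection'' without verification. This is not automatic, and it is where the paper spends half of its proof: writing a section of $N\tilde f_\theta$ as $\xi=T_\theta(\eta)$ with $\eta=\delta\circ p$ for a local section $\delta$ of $Nf$, one computes $(\nabla^{\perp}_{\tilde X}\mathit{\Sigma}_\theta)\xi=T_\theta\bigl(\nabla^{\perp}_{d\sigma(\tilde X)}(\delta\circ p)-\nabla^{\perp}_{\tilde X}(\delta\circ p)\bigr)\circ\sigma$, which vanishes because both covariant derivatives coincide with $\nabla^{\perp}_{dp(\tilde X)}\delta$ downstairs (again using $p\circ\sigma=p$). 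So the fix is: keep your naturality computation, replace ``congruent with the same image'' by the equality $\tilde f\circ\sigma=\tilde f$, drop the Hopf-differential step entirely, and add the parallelism check for $\mathit{\Sigma}_\theta$; with those changes your first route becomes precisely the paper's proof.
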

\begin{proof}
It is enough to prove the existence of an orthogonal and parallel isomorphism between  the normal bundles of $\tilde{f}_{\theta}$ and
  $\tilde{f}_{\theta}\circ \sigma$ that preserves the second fundamental forms. If $T_{\theta}$ is the isomorphism 
between  the normal bundles of $\tilde{f}$ and $\tilde{f}_{\theta}$, then we define the bundle isomorphism  
$$\mathit{\Sigma}_{\theta}:N\tilde{f}_{\theta} \to N(\tilde{f}_{\theta}\circ \sigma)$$
so that at any point $\tilde{x} \in \tilde{M}$  
$$\mathit{\Sigma}_{\theta}|_{\tilde{x}}:N_{\tilde{x}}\tilde{f}_{\theta} \to N_{\tilde{x}}(\tilde{f}_{\theta}\circ \sigma)$$
is given by 
$$\mathit{\Sigma}_{\theta}|_{\tilde{x}}(\xi):=
T_{\theta}|_{\sigma(\tilde{x})}\big(T^{-1}_{\theta}|_{\tilde{x}}(\xi)\big), {\ }{\ } \xi \in N_{\tilde{x}}\tilde{f}_{\theta}.$$
For any $\tilde{v}, \tilde{w} \in T_{\tilde{x}}\tilde{M}$ the second fundamental form of $\tilde{f}_{\theta}\circ \sigma$ is given by
\begin{eqnarray*}{}
B^{\tilde{f}_{\theta}\circ \sigma}|_{\tilde{x}}(\tilde{v}, \tilde{w})&=& 
B^{\tilde{f}_{\theta}}|_{\sigma(\tilde{x})}\big(d\sigma_{\tilde{x}}(\tilde{v}),d\sigma_{\tilde{x}} (\tilde{w})\big)\\
&=&T_{\theta}|_{\sigma(\tilde{x})}\big(  B^{\tilde{f}}|_{\sigma(\tilde{x})}\big(\tilde{J}_{\theta}\circ d\sigma_{\tilde{x}}(\tilde{v}),d\sigma_{\tilde{x}} (\tilde{w})\big) \big),
\end{eqnarray*}
where $B^{\tilde{f}_{\theta}}$ is the second fundamental form of $\tilde{f}_{\theta}$, 
$$\tilde{J}_{\theta}=\cos \theta \tilde{I}+\sin \theta \tilde{J}$$
 and
$\tilde{J}$ is the complex structure of $\tilde{M}.$ Since $\sigma$ is a deck transformation, we have
\begin{equation*}
 \tilde{f} \circ \sigma =\tilde{f} {\ }{\ } \text{and} {\ }{\ } \tilde{J}_{\theta}\circ d\sigma= d\sigma \circ \tilde{J}_{\theta}.
\end{equation*}
 Thus  it follows that
\begin{eqnarray*}{}
B^{\tilde{f}_{\theta}\circ \sigma}|_{\tilde{x}}(\tilde{v}, \tilde{w})
&=&T_{\theta}|_{\sigma(\tilde{x})}\big(  B^{\tilde{f}}|_{\sigma(\tilde{x})}\big( d\sigma_{\tilde{x}}\circ \tilde{J}_{\theta}(\tilde{v}),d\sigma_{\tilde{x}} (\tilde{w}) \big) \big)
\\
&=&T_{\theta}|_{\sigma(\tilde{x})}\big(  B^{\tilde{f}\circ \sigma}|_{\tilde{x}}\big(  \tilde{J}_{\theta}(\tilde{v}),\tilde{w} \big) \big),
\end{eqnarray*}
 or equivalently,
\begin{equation*}
B^{\tilde{f}_{\theta}\circ \sigma}|_{\tilde{x}}(\tilde{v}, \tilde{w}) =
T_{\theta}|_{\sigma(\tilde{x})}\big(  B^{\tilde{f}}|_{\tilde{x}}\big(  \tilde{J}_{\theta}(\tilde{v}),\tilde{w} \big) \big).
\end{equation*}
Then bearing in mind  the definition of $\mathit{\Sigma}_{\theta}$ and the relation 
\begin{equation*}
 B^{\tilde{f}_{\theta}}|_{\tilde{x}}(\tilde{v}, \tilde{w}) 
=T_{\theta}|_{\tilde{x}}\big(  B^{\tilde{f}}|_{\tilde{x}}\big(  \tilde{J}_{\theta}(\tilde{v}),\tilde{w} \big) \big),
\end{equation*}
we find
\begin{equation*}
 B^{\tilde{f}_{\theta}\circ \sigma}|_{\tilde{x}}(\tilde{v}, \tilde{w}) =
\mathit{\Sigma}_{\theta}\big( B^{\tilde{f}_{\theta}}|_{\tilde{x}}(\tilde{v}, \tilde{w})  \big),
\end{equation*}
which shows that $\mathit{\Sigma}_{\theta}$ preserves the second fundamental forms of $\tilde{f}_{\theta}$ and
  $\tilde{f}_{\theta}\circ \sigma$.

Now let $\xi=T_{\theta}(\eta)$ be an arbitrary section of $N\tilde{f}_{\theta}$, where $\eta$ is a section of $N\tilde{f}.$ Then 
\begin{equation*}
\mathit{\Sigma}_{\theta}(\xi)= T_{\theta}(\eta \circ \sigma^{-1})\circ \sigma
\end{equation*}
and consequently for any $\tilde{X}$ tangent to $\tilde{M}$ we have
\begin{eqnarray*}
(\nabla^{\perp}_{\tilde{X}}\mathit{\Sigma}_{\theta})\xi&=&
\nabla^{\perp}_{\tilde{X}}\mathit{\Sigma}_{\theta}(\xi)- \mathit{\Sigma}_{\theta}(\nabla^{\perp}_{\tilde{X}}\xi)\\
&=&\nabla^{\perp}_{\tilde{X}}\big(  T_{\theta}(\eta \circ \sigma^{-1})\circ \sigma  
 \big)-T_{\theta}\big(\nabla^{\perp}_{\tilde{X}}(\eta\circ \sigma^{-1})\big)\circ \sigma
\\
&=&\big( \nabla^{\perp}_{d\sigma(\tilde{X})} T_{\theta}(\eta \circ \sigma^{-1})\big)\circ \sigma  
 -T_{\theta}\big(\nabla^{\perp}_{\tilde{X}}(\eta\circ \sigma^{-1})\big)\circ \sigma
\\
&=&T_{\theta}\big( \nabla^{\perp}_{d\sigma(\tilde{X})} (\eta \circ \sigma^{-1})\big)\circ \sigma  
 -T_{\theta}\big(\nabla^{\perp}_{\tilde{X}}(\eta\circ \sigma^{-1})\big)\circ \sigma
\\
&=&T_{\theta}\big( \nabla^{\perp}_{d\sigma(\tilde{X})} (\eta \circ \sigma^{-1})  
 -\nabla^{\perp}_{\tilde{X}}(\eta\circ \sigma^{-1})\big)\circ \sigma,
\end{eqnarray*}
where, by abuse of notation, $\nabla^{\perp}$ stands for the normal connection of every involved immersion. 
Since $\eta\circ \sigma^{-1}$ is a section of the normal bundle of $\tilde{f}=f\circ p$, we may write $\eta\circ \sigma^{-1}=\delta\circ p$ 
for some local section $\delta$ of the normal bundle of $f$. We observe that
\begin{eqnarray*}
\nabla^{\perp}_{d\sigma(\tilde{X})} (\eta \circ \sigma^{-1})  
 -\nabla^{\perp}_{\tilde{X}}(\eta\circ \sigma^{-1})
&=&\nabla^{\perp}_{d\sigma(\tilde{X})} (\delta\circ p)  
 -\nabla^{\perp}_{\tilde{X}}(\delta\circ p)\\
&=&\nabla^{\perp}_{dp\circ d\sigma(\tilde{X})} \delta  
 -\nabla^{\perp}_{dp(\tilde{X})}\delta =0.
\end{eqnarray*}
Therefore $\mathit{\Sigma}_{\theta}$ is parallel, orthogonal and preserves the second fundamental forms of $\tilde{f}_{\theta}$ and
  $\tilde{f}_{\theta}\circ \sigma$, and this completes the proof.
\end{proof}

\medskip
Lemma \ref{deck} allows us to define a map 
$$\varPhi_{\theta}:\mathcal{D} \to \text{Isom}(S^4)$$
for each $\theta \in [0,2\pi]$, such that 
$$\tilde{f}_{\theta}\circ \sigma=\varPhi_{\theta}(\sigma) \circ \tilde{f}_{\theta}$$
for any $\sigma \in \mathcal{D}$. 
It is easy to see that $\varPhi_{\theta}$ is a homomorphism for each $\theta \in [0,2\pi]$. Furthermore,  $\theta \in \mathcal{S}(f)$ if and only if
 $\varPhi_{\theta}(\mathcal{D})=\{I\}$. In case where the image of $f$ is contained in a totally geodesic $S^3,$ then $\varPhi_{\theta}$
maps $\mathcal{D}$ into $\text{Isom}(S^3)$.

Now we are ready to prove Theorem 1 which describes the structure of the set $\mathcal{S}(f)$.

\begin{proof}[Proof of Theorem 1]
Assume that $\mathcal{S}(f)$ is infinite. Then there exists a sequence $\{\theta_m\}$ in $\mathcal{S}(f)$ which we may assume converges to some
 $\theta_0 \in [0,2\pi],$ by passing to a subsequence if necessary. From  $\varPhi_{\theta_m}(\mathcal{D})=\{I\}$ for all $m \in \mathbb{N},$ 
we immediately obtain 
 $\varPhi_{\theta_0}(\mathcal{D})=\{I\}$. Let  $\sigma \in \mathcal{D}$. By  applying the Mean Value Theorem to each entry
 $(\varPhi_{\theta}(\sigma))_{jk}$ of the corresponding matrix, we get
$$\dfrac{d}{d\theta}(\varPhi_{\theta}(\sigma))_{jk}(\mathring{\theta}_m)=0$$
for some $\mathring{\theta}_m $ which lies between $\theta_0$ and $\theta_m$. By continuity we get
$$\dfrac{d}{d\theta}(\varPhi_{\theta}(\sigma))_{jk}(\theta_0)=0$$
for each $\sigma \in \mathcal{D}$. Applying repeatedly the  Mean Value Theorem, we conclude that 
$$\dfrac{d^n}{d\theta^n}(\varPhi_{\theta}(\sigma))_{jk}(\theta_0)=0$$
for each $\sigma \in \mathcal{D}$ and any integer $n \geq 1$. Since $\varPhi_{\theta}(\sigma)$ is an analytic curve (cf. \cite{EQ}) in $\text{Isom}(S^4)$, we infer that  
$\varPhi_{\theta}(\sigma)=I$, and so $\mathcal{S}(f)=[0,2\pi]$.
\end{proof}

\begin{lemma}\label{isomorphism}
Let $f:M\rightarrow S^{4}$ be an immersed  minimal surface whose image is not contained in any totally geodesic $S^3$. Then for each $\theta \in \mathcal{S}(f)$ 
there exists a parallel
 and orthogonal bundle isomorphism $T_{\theta}:Nf \to Nf_{\theta} $ such that the second fundamental forms of $f$
 and $f_{\theta}$ are related by
$$B^{f_\theta}(X,Y)=T_{\theta}(B^f(J_{\theta}X,Y))$$
for all $X,Y$ tangent to $M$.
\end{lemma}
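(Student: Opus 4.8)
The plan is to descend the data on the universal cover to $M$. Fix $\theta \in \mathcal{S}(f)$, so by definition there is an immersed minimal surface $f_\theta : M \to S^4$ with $\tilde f_\theta = f_\theta \circ p$. On $\tilde M$ we have the associated-family isomorphism $T_\theta : N\tilde f \to N\tilde f_\theta$, which is parallel, orthogonal, and satisfies $B^{\tilde f_\theta}(\tilde X, \tilde Y) = T_\theta(B^{\tilde f}(\tilde J_\theta \tilde X, \tilde Y))$. Since $\tilde f = f \circ p$ and $\tilde f_\theta = f_\theta \circ p$, the bundles $N\tilde f$ and $N\tilde f_\theta$ are the pullbacks $p^* Nf$ and $p^* Nf_\theta$, and $T_\theta$ is a bundle map covering $\mathrm{id}_{\tilde M}$ between them. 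The first task is to show $T_\theta$ is $\mathcal{D}$-equivariant, i.e. that for every deck transformation $\sigma$ the diagram relating $T_\theta$ at $\tilde x$ and at $\sigma(\tilde x)$ commutes; this then lets $T_\theta$ factor through $p$ to give a bundle isomorphism $T_\theta : Nf \to Nf_\theta$ over $M$. Equivariance of the last identity (that $B^{f_\theta}(X,Y) = T_\theta(B^f(J_\theta X, Y))$) and of parallelism/orthogonality then follow automatically because all the ingredients on $\tilde M$ are pullbacks.

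So the heart of the matter is the equivariance of $T_\theta$. Here I would argue by uniqueness. The point is that on $\tilde M$, the isomorphism $T_\theta$ is essentially uniquely determined by $f_\theta$: indeed, for a minimal surface which is not superminimal, the curvature ellipse is a genuine (nondegenerate) ellipse off an isolated set, and on that open dense set the second fundamental form spans the full normal plane, so a parallel orthogonal bundle map matching $B^{\tilde f_\theta}(\cdot,\cdot) = T(B^{\tilde f}(\tilde J_\theta \cdot,\cdot))$ is pinned down pointwise up to at most a discrete ambiguity (a sign, coming from the fact that the oriented orthonormal frame adapted to the major/minor axes of the ellipse is unique up to the natural $\pm$), and then rigid by parallelism once a germ is fixed. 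Given $\sigma \in \mathcal{D}$, the map $\sigma^* T_\theta := (\text{conjugate of } T_\theta \text{ by } d\sigma)$ is again a parallel orthogonal isomorphism $N\tilde f \to N\tilde f_\theta$ satisfying the same defining relation with the same $f_\theta$ — using $\tilde f \circ \sigma = \tilde f$, $f_\theta \circ p \circ \sigma = f_\theta \circ p$, and $\tilde J_\theta \circ d\sigma = d\sigma \circ \tilde J_\theta$, exactly the identities already exploited in the proof of Lemma~\ref{deck}. By the uniqueness just described, $\sigma^* T_\theta = T_\theta$ on the connected set $M \smallsetminus M_1$, hence on all of $M$ by continuity (the isolated circle-points cause no trouble). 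This is the equivariance required.

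Alternatively — and this is probably cleaner to write — one bypasses the ellipse-geometry uniqueness argument by working through the homomorphism $\varPhi_\theta : \mathcal{D} \to \mathrm{Isom}(S^4)$ introduced just before the statement: since $\theta \in \mathcal{S}(f)$ we have $\varPhi_\theta(\mathcal{D}) = \{I\}$, meaning $\tilde f_\theta \circ \sigma = \tilde f_\theta$ for all $\sigma$, so $\tilde f_\theta$ really is $p$-invariant (not merely invariant up to ambient isometry). The hypothesis that $f(M)$ is not contained in a totally geodesic $S^3$ guarantees that the normal bundle isomorphism implementing $\tilde f_\theta \circ \sigma \cong \tilde f_\theta$ is unique (a full-substantial minimal surface has no nontrivial ambient isometry fixing it with trivial differential, and correspondingly no nontrivial parallel orthogonal automorphism of its normal bundle preserving $B$), so $\Sigma_\theta$ from Lemma~\ref{deck} is the identity; unwinding its definition gives $T_\theta|_{\sigma(\tilde x)} = T_\theta|_{\tilde x}$ under the identifications, which is precisely the equivariance of $T_\theta$. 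The rest is descent. I expect the main obstacle to be packaging this uniqueness/substantiality input cleanly — making precise that "not in a totally geodesic $S^3$" rules out the spurious automorphism — rather than the descent itself, which is formal once equivariance is in hand.
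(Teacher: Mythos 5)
Your proposal is in substance the paper's own argument: the paper also descends $\tilde T_\theta$ through the covering map $p$, defining $T_\theta$ locally via sheets of $p$ and then gluing, and the gluing step is exactly your equivariance step --- both rest on the observation that a parallel, orthogonal bundle map satisfying the relation with the second fundamental forms is uniquely determined wherever the image of $B$ spans the whole normal plane, and then extends by continuity. Your second variant (use $\tilde f_\theta\circ\sigma=\tilde f_\theta$, which follows directly from $\tilde f_\theta=f_\theta\circ p$, and conclude that the automorphism $\mathit{\Sigma}_\theta$ of Lemma \ref{deck} is the identity by this uniqueness) is correct and is essentially what the paper carries out in its local-chart formulation.

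One point needs correcting, and it is exactly where the lemma's hypothesis enters. In your first variant you attribute the density of points where $B$ spans the normal plane to non-superminimality (``the curvature ellipse is a genuine nondegenerate ellipse off an isolated set''). Non-superminimality only forces the \emph{circle} points to be isolated; the ellipse may degenerate to a segment on open sets, and it does so identically for any surface lying in a totally geodesic $S^3$. The correct source of the density is the hypothesis that $f(M)$ is not contained in a totally geodesic $S^3$: as in the paper, the set $M_0$ where $K_N$ vanishes (equivalently, where the first normal space is a proper subspace of the normal space) then has dense complement, uniqueness of $T_\theta$ holds on $M\smallsetminus M_0$, and continuity finishes the argument. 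Also, on that set there is no residual sign ambiguity: the relation with the full second fundamental form determines $T_\theta$ pointwise; the $\pm$ you mention concerns only the choice of the axis-adapted frame, not $T_\theta$ itself.
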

\begin{proof}
Let  $\theta \in \mathcal{S}(f)$.  We claim that 
for each point there exist an   open neighborhood $U$   and   a parallel
 and orthogonal bundle isomorphism 
$$T^U_{\theta}:Nf|_U \to Nf_{\theta} |_U$$
 such that the second fundamental forms of $f|_U$
 and $f_{\theta}|_U$  are related by
\begin{equation}\label{BU}
B^{f_{\theta}|_U}(X,Y)=T^U_{\theta}(B^{f|_U}(J_{\theta}X,Y)) 
\end{equation}
for all $X,Y$ tangent to $U$.

Indeed, let $\tilde U \subset \tilde M$  and $U\subset M$ are chosen so that the Riemannian covering map  
$p: \tilde{M} \to M$ maps $\tilde U$ isometrically onto $U$, $\tilde{M}$ being the universal cover of $M$. 
We define the orthogonal isomorphism $T^U_{\theta}$ between the bundles $Nf|_U$ and $Nf_{\theta} |_U $ 
by 
\begin{equation*}
T^U_{\theta}(\xi):=\tilde T_{\theta}(\xi \circ p) \circ (p|_{\tilde U})^{-1}, 
\end{equation*} 
where $\xi$ is section of $Nf|_U$ and 
$$\tilde T_{\theta}:N\tilde f \to N\tilde f_{\theta}$$
is the orthogonal and parallel  isomorphism between the normal bundles of the minimal surfaces $\tilde{f}=f\circ p$ 
and $\tilde{f}_{\theta}=f_{\theta}\circ p$, so that 
\begin{equation}\label{newT}
B^{\tilde f_{\theta}}(\tilde X,\tilde Y)=\tilde T_{\theta}(\tilde B(\tilde J_{\theta}\tilde X,\tilde Y)) 
\end{equation} 
for all $\tilde X,\tilde Y$ tangent to $\tilde M$.

The second fundamental forms of the  minimal surfaces $\tilde{f}=f\circ p$ 
and $\tilde{f}_{\theta}=f_{\theta}\circ p$ are given by
$$   B^{\tilde f}(\tilde X,\tilde Y)=B^{f}(dp(\tilde X), dp(\tilde Y))$$
and 
$$B^{\tilde f_{\theta}}(\tilde X,\tilde Y)=B^{f_{\theta}}(dp(\tilde X), dp(\tilde Y))   
$$
for all $\tilde X $ and $ \tilde Y$. Then it is easy to see that (\ref{newT}) implies (\ref{BU}).

Let  $\xi$ is section of $Nf|_U$. For any $X=dp(\tilde X)$  tangent to $U$, we have 
\begin{eqnarray*}
(\nabla^{\perp}_{ X}T^U_{\theta}) \xi&=&
 \nabla^{\perp}_{ dp(\tilde X)}T^U_{\theta}(\xi)-T^U_{\theta}(\nabla^{\perp}_{ dp(\tilde X)}\xi) \\
&=&\nabla^{\perp}_{dp(\tilde X)}\big(  \tilde T_{\theta}(\xi \circ p|_{\tilde U}) \circ (p|_{\tilde U})^{-1}   \big)
-T^U_{\theta}\big(\nabla^{\perp}_{\tilde X}(\xi \circ p |_{\tilde U})\circ (p|_{\tilde U})^{-1}\big)\\
&=&\big(\nabla^{\perp}_{\tilde X}  \tilde T_{\theta}(\xi \circ p|_{\tilde U})    \big)\circ (p|_{\tilde U})^{-1}
- \tilde T_{\theta}\big(\nabla^{\perp}_{\tilde X} (\xi \circ p|_{\tilde U})    \big)\circ (p|_{\tilde U})^{-1},
\end{eqnarray*}
where, by abuse of notation, $\nabla^{\perp}$ stands for the normal connection of every involved immersion. 
The above shows that $T^U_{\theta}$ is parallel, since $\tilde T_{\theta}$ is parallel.

 Let $V$ be another open subset of $M$ with $U\cap V \neq \varnothing$ and corresponding bundle isomorphism 
$$T^V_{\theta}:Nf|_V \to Nf_{\theta} |_V$$ 
such that the second fundamental forms of $f|_V$
 and $f_{\theta}|_V$   are related by
\begin{equation}\label{BV}
B^{f_{\theta}|_V}(X,Y)=T^V_{\theta}(B^{f|_V}(J_{\theta}X,Y)) 
\end{equation} 
for all $X$ and $Y$ tangent to $V$. We consider the set $M_0$ of points where the normal curvature vanishes, or equivalently, the set of points where the first normal space
 is a proper subset of the normal space. Our assumption  implies that the set $M\smallsetminus M_0$ is dense in $M$.
 From (\ref{BU}) and (\ref{BV}) we see that $T^U_{\theta}=T^V_{\theta}$ on $U\cap V \smallsetminus M_0$.
By continuity,  
we infer that $T^U_{\theta}=T^V_{\theta}$ on $U\cap V $. Thus $T^U_{\theta}$ is globally well-defined.
\end{proof}

\section{Proof of the main result}

Before we proceed to the proof of Theorem 2, we recall  some useful facts.
At first, we need the topological restrictions for minimal surfaces in $S^4$ that were obtained by Eschenburg and Tribuzy \cite{ET0}. 
To this purpose, we review some properties of absolute value type  functions.

The zero set  of an absolute value type function $a$ on a connected compact oriented surface $M$ is either isolated or the whole of $M$, and outside 
its zeros, the function is smooth.
If $a$ is a nonzero absolute value type function, i.e., locally $a=|t_{0}|a_{1}$, with $t_0$ holomorphic, the order $k\geq 1$ of any $x \in M$
 with $a(x)=0$ 
is the order of $t_0$ at $x$. Let $N(a)$ be the sum of all orders for all zeros of $a$. Then $\Delta \log a$ is bounded on
 $M\smallsetminus \{ a=0  \}$ and its integral is given by
\begin{equation*}
\int_{M}\Delta \log adA=-2\pi N(a).
\end{equation*}

The following lemma,  due to Eschenburg and Tribuzy \cite{ET0}, follows immediately from Theorem \ref{ET0} 
just by integrating (\ref{Laplace}) and using the Gauss-Bonnet Theorem and the fact that the Euler number $\chi (Nf)$ 
of the normal bundle is given by
\begin{equation*}
 \int_M K_N dA=2\pi \chi (Nf).
\end{equation*}

\begin{lemma}\label{x}
Let $f:M\rightarrow S^{4}$ be a compact oriented immersed minimal surface. If $f$ is not superminimal, then the Euler number $\chi (Nf)$ 
of the 
normal bundle and the Euler-Poincar\'{e} characteristic $\chi (M)$ of $M$ satisfy 
$$2\chi (M)\pm \chi (Nf)=-N(a_{\mp}).$$
\end{lemma}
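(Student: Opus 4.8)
\textbf{Proof plan for Lemma \ref{x}.}
The strategy is to integrate the Laplace-type equations \eqref{Laplace} from Theorem \ref{ET0} over the compact surface $M$ and then translate the left- and right-hand sides into topological invariants via Gauss--Bonnet and the Euler number formula for the normal bundle. First I would recall that, since $f$ is not superminimal, the functions $a_{\pm}=(1-K\pm K_N)^{1/2}$ are nonzero absolute value type functions, so by the quoted integration formula we have $\int_M \Delta\log a_{\pm}\,dA=-2\pi N(a_{\pm})$; one should note that this is legitimate despite the isolated zeros precisely because $\Delta\log a_{\pm}$ is bounded off the zero set, which is the content of the remark preceding the lemma.

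Next I would integrate the identity $\Delta\log a_{\pm}=2K\mp K_N$ over $M$. For the left-hand side the formula just mentioned gives $-2\pi N(a_{\pm})$. For the right-hand side I would split it as $2\int_M K\,dA \mp \int_M K_N\,dA$. Now invoke the Gauss--Bonnet theorem, $\int_M K\,dA = 2\pi\chi(M)$, together with the quoted identity $\int_M K_N\,dA = 2\pi\chi(Nf)$. Substituting, the equation $\int_M\Delta\log a_{\pm}\,dA = 2\int_M K\,dA \mp \int_M K_N\,dA$ becomes $-2\pi N(a_{\pm}) = 4\pi\chi(M) \mp 2\pi\chi(Nf)$, and dividing by $-2\pi$ yields $2\chi(M)\mp\chi(Nf) = -N(a_{\pm})$, i.e. $2\chi(M)\pm\chi(Nf)=-N(a_{\mp})$, which is exactly the asserted identity once one matches the sign conventions.

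Since essentially all the analytic work has been done in Theorem \ref{ET0} and in the preparatory discussion of absolute value type functions, I do not expect a serious obstacle here; the lemma is genuinely a corollary. The one point that deserves care is the justification that integration of $\Delta\log a_{\pm}$ against $dA$ over the whole of $M$ really produces $-2\pi N(a_{\pm})$ and not some boundary contribution from the isolated zeros --- this is where the local description $a_{\pm}=|t_0|a_1$ with $t_0$ holomorphic of order $k$ at a zero is used, the singularity of $\Delta\log a_{\pm}$ being of logarithmic/Dirac type with total mass $-2\pi k$ at each zero. Modulo invoking that fact (already stated in the excerpt), the proof is a one-line substitution, so I would simply write it out as such, being explicit about which of the two sign choices $\pm$ in \eqref{Laplace} corresponds to which of $N(a_+)$, $N(a_-)$.
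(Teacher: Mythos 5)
Your proposal is correct and coincides with the paper's own argument: the paper derives the lemma exactly by integrating $\Delta\log a_{\pm}=2K\mp K_N$ over $M$, using $\int_M\Delta\log a_{\pm}\,dA=-2\pi N(a_{\pm})$, Gauss--Bonnet, and $\int_M K_N\,dA=2\pi\chi(Nf)$. Your sign bookkeeping (non-superminimality guaranteeing $a_{\pm}\not\equiv 0$, and the relabeling giving $2\chi(M)\pm\chi(Nf)=-N(a_{\mp})$) matches the paper.
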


We also need some facts about holomorphic bundle-valued forms (cf.  \cite{BWW}). Let $M$ be a 
2-dimensional oriented Riemannian manifold with the canonically defined complex structure and $\mathsf{E}$ be a complex vector bundle over $M$
 equipped with a connection ${\nabla}$. For any $\mathsf{E}$-valued $r$-covariant tensor field $F$ on $M$ 
the covariant derivative is defined in the usual way, where $M$ is equipped with the Levi-Civit\'{a} connection. 
If $F$ is of holomorphic type $(r,0)$, we say that $F$ is holomorphic if its covariant derivative is of holomorphic type $(r+1,0)$. In terms of a local
complex coordinate $(U,z)$ on $M$, a tensor field $F$  of holomorphic type $(r,0)$ is written on $U$  in the form 
$$F=udz^r,$$ 
where $u:U \to \mathsf{E}$ is given by
$$u=F\big(\dfrac{\partial}{\partial z}, \dots,   \dfrac{\partial}{\partial z}   \big).$$
Then $F$ is holomorphic if and only if 
$${\nabla}_{\frac{\partial}{\partial \overline{z}}}u=0.$$
The following, which we quote from \cite{BWW}, is crucial for the proof of the main result.

\begin{theorem}\label{BWW}
Assume that the  $\mathsf{E}$-valued  tensor field $F$ on $M$ is holomorphic, and let $x \in M$ be such that $F(x)=0$. Let $(U,z)$ be a local
complex coordinate on $M$ with $z(x)=0.$ Then either $F \equiv 0$ on $U$, or $F=z^mF^*,$ where $m$ is a positive integer and $F^*(x)\neq 0.$ 
\end{theorem}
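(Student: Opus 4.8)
The statement is the classical local structure theorem for holomorphic bundle-valued tensor fields: a holomorphic section that vanishes at a point either vanishes on a whole neighbourhood or vanishes to finite order, with a well-defined leading term. I would reduce it to the corresponding fact for scalar holomorphic functions. Working in a local complex coordinate $(U,z)$ with $z(x)=0$, write $F=u\,dz^r$ where $u=F(\partial/\partial z,\dots,\partial/\partial z):U\to\mathsf{E}$. By the characterization recalled just before the statement, holomorphicity of $F$ is equivalent to $\nabla_{\partial/\partial\bar z}u=0$; that is, $u$ is a $\nabla$-holomorphic section of $\mathsf{E}$ over $U$.

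First I would pass to a local trivialization of $\mathsf{E}$ over a possibly smaller coordinate ball, and more importantly choose a \emph{holomorphic} frame for $\mathsf{E}$, i.e. a frame $\{\sigma_1,\dots,\sigma_N\}$ of local sections with $\nabla_{\partial/\partial\bar z}\sigma_j=0$. Such a frame exists by the standard fact that every $C^\infty$ complex vector bundle with connection over a surface (more precisely, the $(0,1)$-part of the connection) defines a holomorphic structure — the integrability obstruction $\bar\partial^2$ vanishes automatically in complex dimension one — so one can solve the relevant $\bar\partial$-system locally. Expanding $u=\sum_j u_j\sigma_j$, the condition $\nabla_{\partial/\partial\bar z}u=0$ together with $\nabla_{\partial/\partial\bar z}\sigma_j=0$ forces $\partial u_j/\partial\bar z=0$ for every $j$, so each component function $u_j$ is genuinely holomorphic on $U$ in the ordinary sense.

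Now the problem is entirely scalar. If all $u_j$ vanish identically near $x$, then $F\equiv 0$ on (a neighbourhood in) $U$, which is the first alternative; by the identity theorem this in fact propagates, but locally it is all that is claimed. Otherwise, let $m_j\ge 0$ be the order of vanishing of $u_j$ at $z=0$ (with $m_j=+\infty$ if $u_j\equiv0$), and set $m:=\min_j m_j$, a nonnegative integer; since $F(x)=0$ and $u$ is not identically zero, $m\ge 1$. Writing $u_j=z^m v_j$ with $v_j$ holomorphic and $v_{j_0}(0)\neq0$ for the index $j_0$ achieving the minimum, we get $u=z^m v$ with $v=\sum_j v_j\sigma_j$ a holomorphic section and $v(0)\neq 0$. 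Setting $F^*:=v\,dz^r$ gives $F=z^m F^*$ with $F^*(x)\neq0$, which is the second alternative.

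The only genuine point requiring care — and the step I expect to be the main obstacle to write cleanly — is the existence of the local holomorphic frame $\{\sigma_j\}$, i.e. the assertion that the $(0,1)$-part of $\nabla$ endows $\mathsf{E}$ with a local holomorphic structure; this is where the two-dimensionality of $M$ is used (no integrability condition is needed). Everything after that is the elementary order-of-vanishing argument for holomorphic functions of one variable. Since the paper cites \cite{BWW} for this theorem, I would either invoke that reference directly or sketch the holomorphic-frame construction as above and then give the short scalar reduction.
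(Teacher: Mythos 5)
The paper does not actually prove this statement: it is quoted verbatim from \cite{BWW}, so there is no argument in the text to measure yours against. Judged on its own, your plan is correct and follows the standard route: reduce the condition $\nabla_{\partial/\partial\bar z}u=0$ to ordinary holomorphicity of component functions by choosing a local frame of $\mathsf{E}$ that is annihilated by the $(0,1)$-part of $\nabla$, and then run the elementary order-of-vanishing argument for scalar holomorphic functions of one variable. You correctly isolate the only non-elementary ingredient, namely the existence of the local $\bar\partial$-flat frame; this is the Koszul--Malgrange fact that the $(0,1)$-part of a connection on a complex vector bundle over a Riemann surface defines a holomorphic structure (no integrability condition in complex dimension one), and it amounts to locally solving $\partial g/\partial\bar z+Ag=0$ for an invertible matrix-valued $g$. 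Provided you either cite that fact or carry out the local $\bar\partial$-construction, the remaining steps are complete: each $u_j$ is holomorphic, $m=\min_j\operatorname{ord}_0 u_j\geq 1$ since $u(0)=0$, so $u=z^m v$ with $v(0)\neq 0$, whence $F=z^mF^*$ near $x$ (and $F^*:=z^{-m}F$ extends over all of $U$), while in the degenerate case the identity theorem plus an open-closed argument gives $F\equiv 0$ on the connected set $U$. It is worth noting an alternative that bypasses the holomorphic frame altogether: in an arbitrary smooth frame the holomorphicity condition reads $\partial u/\partial\bar z+Au=0$, and the similarity principle (Vekua/Carleman) for such first-order elliptic systems yields directly $u=z^m u^*$ with $u^*$ nonvanishing at the origin; either route proves the theorem, and yours is sound as proposed.
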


Now let $f:M\rightarrow S^{4}$ be an immersed minimal surface which is not contained in any totally geodesic $S^3$. Assume hereafter that $f$ is not superminimal. 
The set  $M_1$ of  points where the curvature ellipse 
is  a circle consists of isolated points only. For each point  $x \in M\smallsetminus M_1$, 
we consider an orthonormal frame $\{e_1,e_2,e_3, e_4 \}$  on a 
neighborhood $U_x\subset M\smallsetminus M_1$ of $x$ as
 in Lemma \ref{local theory} with normal connection form $\omega_{34}$.

For arbitrary  $\theta_j \in \mathcal{S}(f),j=1,\dots, n, $  we consider the local orthonormal frame
 $\{e^{\theta_j}_{3}, e^{\theta_j}_{4} \}$ of the normal bundle of $f_{\theta_j}$ 
defined by 
$$e^{\theta_j}_{3}:=T_{\theta_j}(e_3), {\ } e^{\theta_j}_{4}:=T_{\theta_j}(e_4),$$ 
where 
$$T_{\theta_j}:Nf \to Nf_{\theta_j} $$
 is the bundle isomorphism  of Lemma \ref{isomorphism}. Obviously, $\omega_{34}$ 
 is also  the normal connection form of $f_{\theta_j}$
with respect to this frame. By virtue of Lemma \ref{isomorphism}, we easily find  that $H_3 , H_4$ and the corresponding functions
 $H^{\theta_j}_{3} , H^{\theta_j}_{4}$ for $f_{\theta_j}$ are related by 
\begin{equation}\label{Htheta}
 H^{\theta_j}_{3}=\exp(-2i\theta_j) H_3 {\ }{\ } \text{and} {\ }{\ } H^{\theta_j}_{4}=\exp(-2i\theta_j) H_4.
\end{equation}
 Using (\ref{Htheta}) and the Weingarten formula for $f_{\theta_j}$, we obtain 
\begin{equation}\label{Weingarten1}
  D_{{E}}e^{\theta_j}_{3}=-\kappa_1\exp(i\theta_j) df_{\theta_j}(\overline{E})+ \omega_{34}({E})e^{\theta_j}_{4}
\end{equation} 
and
\begin{equation}\label{Weingarten2}
 D_{{E}}e^{\theta_j}_{4}=i\mu_1 \exp(i\theta_j) df_{\theta_j}(\overline{E})- \omega_{34}(E)e^{\theta_j}_{3},
\end{equation} 
where 
$$E=e_1-ie_2$$
 and $D$ stands for the usual connection in the induced bundle $(i_1\circ f)^*(T\mathbb{R}^5)$, $i_1: S^4 \to \mathbb{R}^5$ being the inclusion map.

The following auxiliary lemma is needed for the proof of the main result.

\begin{lemma}\label{all}
Assume that there exist vectors $v_j \in \mathbb{R}^5,j=1,\dots, n,$ such that
\begin{equation}\label{coordinate}
\sum_{j=1}^n \langle  f_{\theta_j}, v_j\rangle =0  {\ }{\ } \text{on} {\ }{\ } U_x . 
\end{equation}

(i) Then 
\begin{equation}\label{kappa-mu}
\sum_{j=1}^n \exp(i\theta_j) \Big(\kappa_1 \langle     e^{\theta_j}_{3},   v_j   \rangle -i\mu_1 \langle    e^{\theta_j}_{4},  v_j     \rangle \Big)=0.
\end{equation}

(ii) Away from points where $\omega_{34}$ vanishes, we have
\begin{eqnarray}\label{basic}
 \sum_{j=1}^n \exp(i\theta_j) \langle     e^{\theta_j}_{4},   v_j   \rangle =
\dfrac{\kappa_1}{2\omega_{34}({E})}\sum_{j=1}^n \exp(2i\theta_j)\langle    df_{\theta_j}(\overline{E}),  v_j    \rangle
\end{eqnarray}
and
\begin{equation}\label{crucial}
2\omega_{34}({E})\sum_{j=1}^n \exp(2i\theta_j) \langle    f_{\theta_j},  v_j    \rangle= 
L  \sum_{j=1}^n \exp(2i\theta_j) \langle    df_{\theta_j}(\overline{E}),   v_j  \rangle,
\end{equation} 
where $L$ is the complex valued function given by 
$$L=-{E}\big(\omega_{34}({E})\big)-3i \omega_{12}({E})\omega_{34}({E}).$$

(iii) Furthermore, we have
\begin{eqnarray}\label{holomorphic}
\overline{E}\Big( \sum_{j=1}^n \exp(i\theta_j) \langle     e^{\theta_j}_{4},   v_j   \rangle \Big)=
-\omega_{34}(\overline{E})\sum_{j=1}^n \exp(i\theta_j) \langle    e^{\theta_j}_{3},  v_j     \rangle.
\end{eqnarray}
\end{lemma}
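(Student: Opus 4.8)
The plan is to differentiate the hypothesis \eqref{coordinate} repeatedly along $E$ and $\overline E$, using the Weingarten formulas \eqref{Weingarten1}, \eqref{Weingarten2} and the Gauss formula for $f_{\theta_j}$, and then extract the three identities by collecting the normal and tangential components. For part (i): apply $\overline E$ to \eqref{coordinate}. Since $df_{\theta_j}(\overline E)$ lives in the tangent space while the terms involving $e^{\theta_j}_3,e^{\theta_j}_4$ are normal, and since $\langle f_{\theta_j},f_{\theta_j}\rangle=1$ forces $\langle df_{\theta_j}(\overline E),f_{\theta_j}\rangle=0$ in $\mathbb{R}^5$, differentiating once and then projecting appropriately should isolate a first-order relation. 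Concretely, differentiate \eqref{coordinate} along $E$, getting $\sum_j\langle df_{\theta_j}(E),v_j\rangle=0$; differentiate that along $\overline E$ and use $D_{\overline E}df_{\theta_j}(E)=$ (second fundamental form terms) $+$ (a multiple of $f_{\theta_j}$ coming from the $S^4\subset\mathbb{R}^5$ curvature) $+$ connection terms, then substitute back \eqref{coordinate} to kill the $f_{\theta_j}$ part and \eqref{Htheta} to rewrite the second fundamental form in terms of $\kappa_1,\mu_1$; the surviving normal component is \eqref{kappa-mu}.

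For part (ii): the first identity \eqref{basic} comes from differentiating \eqref{coordinate} twice — along $E$ then along $E$ again (the $(2,0)$-direction) — and reading off the $e^{\theta_j}_4$-component, where the factor $\kappa_1/(2\omega_{34}(E))$ arises by eliminating the $e^{\theta_j}_3$-component using \eqref{kappa-mu} and \eqref{Weingarten1}–\eqref{Weingarten2}; this is why one must stay away from the zeros of $\omega_{34}$. The identity \eqref{crucial} is obtained by differentiating \eqref{basic} once more along $E$: apply $E$ to both sides, use \eqref{Weingarten2} on the left (which produces a $df_{\theta_j}(\overline E)$ term and an $e^{\theta_j}_3$ term), use the Gauss formula on $E(df_{\theta_j}(\overline E))$ on the right (which produces an $f_{\theta_j}$ term — the source of the left-hand side of \eqref{crucial}), and then eliminate the residual $e^{\theta_j}_3$ and $e^{\theta_j}_4$ terms using \eqref{kappa-mu} and \eqref{basic} again; the coefficient $L=-E(\omega_{34}(E))-3i\,\omega_{12}(E)\omega_{34}(E)$ is exactly what is left after collecting $E(\kappa_1)$, $E(\omega_{34}(E))$, and the connection-form contributions $\omega_{12}(E)$ that enter through $D_E E$ and through differentiating $\exp(2i\theta_j)$-weighted frame terms.

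For part (iii): differentiate $\sum_j \exp(i\theta_j)\langle e^{\theta_j}_4,v_j\rangle$ along $\overline E$ directly using \eqref{Weingarten2} with $\overline E$ in place of $E$ (equivalently, conjugate \eqref{Weingarten2}); the $df_{\theta_j}(E)$-term that appears pairs with $v_j$ to give $\sum_j \exp(i\theta_j)\mu_1\langle df_{\theta_j}(E),v_j\rangle$, which vanishes because $\sum_j\langle df_{\theta_j}(E),v_j\rangle=0$ was already derived from \eqref{coordinate} — wait, one must be careful with the weights: the weights $\exp(i\theta_j)$ do not factor out, so instead one uses \eqref{kappa-mu} to cancel the tangential part, leaving only $-\omega_{34}(\overline E)\sum_j \exp(i\theta_j)\langle e^{\theta_j}_3,v_j\rangle$, which is \eqref{holomorphic}.

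The main obstacle will be the bookkeeping in \eqref{crucial}: one has to track the $f_{\theta_j}$-contribution from the ambient curvature of $S^4$, the $\kappa_1$- and $\omega_{34}(E)$-derivative terms, and the connection form $\omega_{12}$ simultaneously, repeatedly feeding back the earlier identities \eqref{coordinate}, \eqref{kappa-mu}, \eqref{basic} to eliminate the normal components that are not supposed to survive. Getting the coefficient $L$ to come out exactly as stated — in particular the factor $3i$ on the $\omega_{12}(E)\omega_{34}(E)$ term — requires care about which terms carry which power of $\exp(i\theta_j)$ and correctly applying $\omega_{34}$ as a real $1$-form so that $\omega_{34}(\overline E)=\overline{\omega_{34}(E)}$. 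Everything else is a routine, if lengthy, application of the structure equations recorded in Section 2 and Lemma \ref{local theory}.
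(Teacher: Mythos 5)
Your overall strategy---differentiate \eqref{coordinate} repeatedly, use the Gauss and Weingarten formulas, and feed the earlier identities back in to eliminate unwanted components---is the same as the paper's, and your account of how \eqref{crucial} and the coefficient $L$ arise matches the paper's derivation. However, one step as you describe it actually fails. For (i), differentiating $\sum_j\langle df_{\theta_j}(E),v_j\rangle=0$ along $\overline E$ produces as its normal part $\sum_j\langle B^{f_{\theta_j}}(\overline E,E),v_j\rangle$, and $B^{f_{\theta_j}}(\overline E,E)=0$ because minimality kills the $(1,1)$-component of the second fundamental form; the tangential part dies by the first derivative of \eqref{coordinate} and the $f_{\theta_j}$-part by \eqref{coordinate} itself, so your mixed $E,\overline E$ derivative yields only a triviality, not \eqref{kappa-mu}. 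The correct move (the paper's) is to differentiate twice, obtaining $\sum_j\langle B^{f_{\theta_j}}(X,Y),v_j\rangle=0$ for all $X,Y$, and then evaluate on $(E,E)$, i.e.\ use the $(2,0)$-part; with the frame of Lemma \ref{local theory} ($H_3=\kappa_1$, $H_4=i\mu_1$) and the relation \eqref{Htheta} this gives exactly \eqref{kappa-mu}.

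Two further points need repair. For \eqref{basic}, you cannot ``read off the $e^{\theta_j}_4$-component'': \eqref{kappa-mu} is a single scalar identity in the two unknowns $\sum_j e^{i\theta_j}\langle e^{\theta_j}_3,v_j\rangle$ and $\sum_j e^{i\theta_j}\langle e^{\theta_j}_4,v_j\rangle$, so a second independent scalar equation is required. The paper gets it by differentiating \eqref{kappa-mu} along $E$ (a third derivative of \eqref{coordinate}), using \eqref{Weingarten1}--\eqref{Weingarten2} together with $E(\kappa_1)=-2i\kappa_1\omega_{12}(E)+i\mu_1\omega_{34}(E)$ and $E(\mu_1)=-2i\mu_1\omega_{12}(E)+i\kappa_1\omega_{34}(E)$ (consequences of \eqref{connection}), and then solving the resulting $2\times2$ linear system, whose determinant is $2(\kappa_1^2-\mu_1^2)\omega_{34}(E)$---this elimination is precisely where the hypothesis ``away from zeros of $\omega_{34}$'' (and $\kappa_1^2>\mu_1^2$ on $U_x$) is used, a point your sketch leaves implicit. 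Finally, in (iii) your second thought is the wrong one: conjugating \eqref{Weingarten2} turns the factor $e^{i\theta_j}$ into $e^{-i\theta_j}$, which cancels the weight, so the tangential contribution is the unweighted sum $-i\mu_1\sum_j\langle df_{\theta_j}(E),v_j\rangle$, and this vanishes by the first derivative of \eqref{coordinate}; your proposed substitute---using \eqref{kappa-mu} to cancel it---cannot work, since \eqref{kappa-mu} involves only the normal components and says nothing about the tangential term. With these corrections the argument becomes the paper's proof.
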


\begin{proof}
Our assumption implies that
\begin{equation}\label{d}
\sum_{j=1}^n \langle  df_{\theta_j}, v_j\rangle =0. 
\end{equation}
Differentiating, using the Gauss formula and (\ref{coordinate}), we immediately see
that the second fundamental forms of $f_{\theta_j}$ satisfy 
\begin{equation*}
\sum_{j=1}^n \langle  B^{f_{\theta_j}}, v_j\rangle =0. 
\end{equation*}
This on account of (\ref{Htheta}) yields
\begin{equation*}
\sum_{j=1}^n \exp(i\theta_j) \Big(\overline{H}_3 \langle  e^{\theta_j}_{3}, v_j   \rangle +\overline{H}_4  \langle    e^{\theta_j}_{4},  v_j \rangle \Big)=0. 
\end{equation*}
Since the frame  $\{e_{3}, e_{4} \}$ is chosen as in Lemma \ref{local theory}, we have $H_3  ={\kappa_1}, H_4= i {\mu_1 }$, and the above
 immediately implies (\ref{kappa-mu}).

Differentiating (\ref{kappa-mu}) with respect to ${E}$ and using (\ref{Weingarten1}) and (\ref{Weingarten2}) we obtain
\begin{eqnarray*}{}
\sum_{j=1}^n \exp(i\theta_j) \Big(\big( E(\kappa_1)+i\mu_1 \omega_{34}({E}) \big) \langle  e^{\theta_j}_{3}, v_j   \rangle 
- \big( iE(\mu_1)-\kappa_1 \omega_{34}({E})  \big)  \langle    e^{\theta_j}_{4},  v_j \rangle \Big)
\\
=(\kappa_1^2-\mu_1^2)\sum_{j=1}^n \exp(2i\theta_j) \langle    df_{\theta_j}(\overline{E}),   v_j  \rangle.  
\end{eqnarray*}
We view (\ref{kappa-mu}) and the above equation as a linear system with unknowns 
$$\sum_{j=1}^n \exp(i\theta_j)\langle    e^{\theta_j}_{3},  v_j     \rangle, {\ } 
\sum_{j=1}^n \exp(i\theta_j)\langle    e^{\theta_j}_{4},  v_j     \rangle.$$
The determinant of this system  is given by
$$a= (\kappa_1^2-\mu_1^2)\omega_{34}({E})-i\big(\kappa_1 E(\mu_1)-\mu_1 E(\kappa_1)  \big).$$
Equations (\ref{connection}) yield
\begin{equation}\label{kappa}
 E(\kappa_1)=-2i\kappa_1\omega_{12}(E)+i\mu_1\omega_{34}({E})
\end{equation}
and
\begin{equation}\label{mu}
 E(\mu_1)=-2i\mu_1\omega_{12}(E)+i\kappa_1\omega_{34}({E}).
\end{equation}
Then  the determinant is written as 
$$a=2(\kappa_1^2-\mu_1^2)\omega_{34}({E}).$$
Using (\ref{kappa}), (\ref{mu}) and solving the linear system,   we  easily obtain (\ref{basic}).

Now differentiating (\ref{basic})  with respect to $E$,  using  (\ref{Weingarten1}), (\ref{Weingarten2}), (\ref{kappa-mu}),  (\ref{basic}) and
the Gauss formula, we find
\begin{eqnarray*}
 \Big( E\big(  \dfrac{\kappa_1}{\omega_{34}(E)}     \big)  -\dfrac{i\kappa_1\omega_{12}(E)}{\omega_{34}(E)} -i\mu_1   \Big)
\sum_{j=1}^n \exp(2i\theta_j) \langle    df_{\theta_j}(\overline{E}),   v_j  \rangle\\
=-\dfrac{2\kappa_1}{\omega_{34}(E)}\sum_{j=1}^n \exp(2i\theta_j)\langle    f_{\theta_j},  v_j    \rangle,
\end{eqnarray*}
or equivalently, by virtue of (\ref{kappa}),
\begin{eqnarray*}
\kappa_1\Big({E}\big(\omega_{34}({E})\big)+3i \omega_{12}({E})\omega_{34}({E}) \Big)   
   \sum_{j=1}^n \exp(2i\theta_j) \langle    df_{\theta_j}(\overline{E}),   v_j  \rangle \\
=
-2\kappa_1\omega_{34}({E})\sum_{j=1}^n \exp(2i\theta_j)\langle    f_{\theta_j},  v_j    \rangle.  
\end{eqnarray*}
This last equation is equivalent to (\ref{crucial}), since $\kappa_1^2>\mu_1^2$ on $U_x.$

Appealing to (\ref{Weingarten2}), we observe that
\begin{eqnarray*}
\overline{E}\Big( \sum_{j=1}^n \exp(i\theta_j)\langle     e^{\theta_j}_{4},   v_j   \rangle \Big)
&=&-i\mu_1 \sum_{j=1}^n \langle df_{\theta_j}(E), v_j   \rangle \\
&-& \omega_{34}(\overline{E}) 
 \sum_{j=1}^n \exp(i\theta_j)\langle   e^{\theta_j}_{3},  v_j   \rangle, 
\end{eqnarray*}
which in view of (\ref{d}) immediately yields (\ref{holomorphic}). 
\end{proof}

Now we are ready to give the proof of the main result.

\begin{proof}[Proof of Theorem 2]
 Let $f:M \to S^4$ be an isometric minimal immersion of a compact oriented 2-dimensional Riemannian manifold $M$
 into $S^4$ with nontrivial normal bundle. 
We may assume that $f$ is not superminimal, otherwise there is nothing to prove. 
According to Theorem 1, either there are only finitely many  noncongruent immersed minimal surfaces isometric to $f$ with the same normal curvature,
 or  the space of all minimal surfaces in $S^4$ with these properties is a circle.

Arguing indirectly, we suppose that $\mathcal{S}(f)=[0,2\pi]$. The strategy is to prove that the coordinate functions of the minimal surfaces 
$f_{\theta}, \theta \in [0,2\pi],$ are linearly independent. On the other hand, these functions are eigenfunctions of the Laplace
operator of $M$ with corresponding eigenvalue 2. This leads to a contradiction since the eigenspaces of the Laplace operator are finite dimensional.

\begin{claim}
Let $0<\theta_1<\dots<\theta_n\leq 2\pi$.  If  for vectors $v_j \in \mathbb{R}^5,j=1,\dots, n,$ the following holds 
\begin{equation}\label{vector}
\sum_{j=1}^n \langle  f_{\theta_j}, v_j\rangle =0, 
\end{equation} 
then $v_j=0$ for all $j=1,\dots, n.$
\end{claim}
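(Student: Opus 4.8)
The plan is to argue by induction on $n$, using the differential machinery of Lemma \ref{all} together with the holomorphicity statement of Theorem \ref{BWW}. First I would reduce everything to a fixed neighborhood $U_x \subset M \smallsetminus M_1$ and observe that it suffices to derive a contradiction from a nontrivial relation \eqref{vector} in which all $v_j \neq 0$ (discarding any vanishing $v_j$ and relabeling). The base case $n=1$ says $\langle f_{\theta_1}, v_1\rangle = 0$ on $U_x$; since $f_{\theta_1}$ is an immersion into $S^4 \subset \mathbb{R}^5$ whose image on any open set spans $\mathbb{R}^5$ (here one uses that $f_{\theta_1}$, being isometric to $f$ which is not contained in a totally geodesic $S^3$, is itself not contained in a hyperplane — this should be drawn from Lemma \ref{isomorphism} and the substantiality hypothesis), this forces $v_1 = 0$, a contradiction.

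For the inductive step I would feed the relation \eqref{vector} into Lemma \ref{all}. Part (iii), equation \eqref{holomorphic}, says precisely that the $\mathbb{R}^5 \otimes \mathbb{C}$-valued function $u := \sum_{j=1}^n \exp(i\theta_j)\langle e^{\theta_j}_4, v_j\rangle$, when combined with the companion quantity $\sum_j \exp(i\theta_j)\langle e^{\theta_j}_3, v_j\rangle$, satisfies a $\bar\partial$-type equation with respect to the normal connection $\omega_{34}$; after an integrating-factor change this exhibits a holomorphic-type section, so by Theorem \ref{BWW} it either vanishes identically on $U_x$ or vanishes only at isolated points. Then I would exploit \eqref{crucial}: the relation
$$2\omega_{34}(E)\sum_{j=1}^n \exp(2i\theta_j)\langle f_{\theta_j}, v_j\rangle = L\sum_{j=1}^n \exp(2i\theta_j)\langle df_{\theta_j}(\overline E), v_j\rangle$$
is a new linear relation among the $f_{\theta_j}$ (and their derivatives) but now with coefficients $\exp(2i\theta_j)$ in place of $\exp(i\theta_j)$. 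Iterating — differentiating repeatedly and re-applying parts (i) and (ii) of Lemma \ref{all} — produces a family of relations with coefficients $\exp(ki\theta_j)$, $k = 1, 2, 3, \dots$, among the fixed finite list of functions $\{f_{\theta_j}, df_{\theta_j}(\overline E)\}$; since the $\theta_j$ are distinct in $(0,2\pi]$, the corresponding Vandermonde-type system in the $\exp(i\theta_j)$ is invertible, which lets me isolate each $\langle f_{\theta_j}, v_j\rangle$ (or a derivative thereof) individually and conclude $v_j = 0$ by the $n=1$ argument applied to each $f_{\theta_j}$ separately.

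The main obstacle I anticipate is bookkeeping the degeneracy loci: equations \eqref{basic} and \eqref{crucial} are valid only away from the zeros of $\omega_{34}$, and Lemma \ref{local theory} itself holds only off the isolated set $M_1$. So the argument naturally proves $v_j = 0$ only on a dense open subset of $U_x$, and I must then invoke continuity (and the fact, from Theorem \ref{BWW}, that the relevant holomorphic-type objects have isolated zeros, hence a relation holding on a dense open set holds everywhere) to upgrade to all of $U_x$, and finally to all of $M$ by connectedness. A secondary point requiring care is the precise form of the integrating factor in \eqref{holomorphic} and checking that the "other unknown" $\sum_j \exp(i\theta_j)\langle e^{\theta_j}_3, v_j\rangle$ does not itself vanish in a way that collapses the induction; here one uses \eqref{kappa-mu} to express it in terms of $\langle e^{\theta_j}_4, v_j\rangle$ since $\kappa_1^2 > \mu_1^2$ on $U_x$. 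Once the Claim is established, the coordinate functions of $\{f_\theta\}_{\theta \in [0,2\pi]}$ are linearly independent, contradicting finite-dimensionality of the eigenspace of $\Delta_M$ for the eigenvalue $2$, which completes the proof of Theorem 2.
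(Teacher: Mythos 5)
There is a genuine gap, and it sits at the heart of the argument. The paper's proof hinges on actually \emph{proving} that $\psi:=\sum_{j}\exp(i\theta_j)\langle e^{\theta_j}_{4},v_j\rangle$ vanishes identically on $M\smallsetminus M_1$, and this is where compactness and the hypothesis $\chi(Nf)\neq 0$ enter: combining \eqref{holomorphic} with \eqref{kappa-mu} and \eqref{connection} one shows that $\psi^2\bigl(1-\mu^2/\kappa^2\bigr)$ is holomorphic and bounded on $M\smallsetminus M_1$, hence extends to a holomorphic function on the compact Riemann surface $M$ and is a constant $c$; then $c=0$ is forced by a separate argument at the points of $M_1$ (factoring $B^{(2,0)}=z^{m_l}B^{*(2,0)}$ via Theorem \ref{BWW}, comparing $N(a_+)$ with $N(a_-)$, and invoking Lemma \ref{x} together with $\chi(Nf)\neq 0$), and $\omega_{34}(E)$ is prevented from vanishing on open sets by Sakaki's theorem. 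Your proposal stops at the dichotomy ``vanishes identically or has isolated zeros'' coming from holomorphicity; that dichotomy by itself decides nothing, and nothing in your argument uses compactness of $M$ or the nontriviality of the normal bundle inside the proof of the Claim, even though these hypotheses are exactly what rule out the first horn. Without $\psi\equiv 0$ you cannot pass from \eqref{basic} and \eqref{crucial} to the clean relation $\sum_j\exp(2i\theta_j)\langle f_{\theta_j},v_j\rangle=0$: equation \eqref{crucial} as it stands ties $\sum_j\exp(2i\theta_j)\langle f_{\theta_j},v_j\rangle$ to the unknown functions $\omega_{34}(E)$, $L$ and the derivative terms $\langle df_{\theta_j}(\overline E),v_j\rangle$, so it is not yet ``a new linear relation among the $f_{\theta_j}$.''

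The iteration you then propose is also unsupported. Repeated differentiation does not produce relations with coefficients $\exp(ki\theta_j)$ for arbitrarily large $k$: each differentiation of $df_{\theta_j}(\overline E)$ reintroduces the second fundamental form, whose $(2,0)$-part carries the factor $\exp(-2i\theta_j)$ (cf.\ \eqref{Htheta} and the Weingarten formulas \eqref{Weingarten1}--\eqref{Weingarten2}), so the angle multipliers do not climb and no Vandermonde system in $\exp(i\theta_j)$ arises. What the paper actually does is combine only the two relations \eqref{vector} and $\sum_j\exp(2i\theta_j)\langle f_{\theta_j},v_j\rangle=0$ (taking real and imaginary parts, i.e.\ $\cos 2\theta_j$, $\sin 2\theta_j$ combinations) to eliminate a single $v_j$, reduce $n$ by one, and repeat until a single relation $\langle f_{\theta_n},w\rangle=0$ forces $f_{\theta_n}$ into a totally geodesic $S^3$, contradicting $K_N\not\equiv 0$ --- your base case $n=1$ is essentially this terminal step and is fine. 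So the overall skeleton (reduce $n$, end with the substantiality contradiction) matches the paper, but the step that makes the reduction possible --- the global Liouville-type argument on the compact surface plus the Euler-number count forcing $c=0$, and the Sakaki argument controlling the zeros of $\omega_{34}$ --- is missing from your proposal, and without it the proof does not go through.
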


Assume to the contrary that each vector $v_j,j=1,\dots, n,$  is nonzero. Let $M_1$ be the set of isolated points where the curvature ellipse 
is  a circle. Obviously $M_1$ is finite. We set $M_1=\{x_1, \dots,x_k\}$. 
Around each point  $x \in M\smallsetminus M_1$, we consider a local complex coordinate $(U_x, z)$ and an 
 orthonormal frame $\{e_1,e_2,e_3, e_4 \}$ along $f$ on  $U_x\subset M\smallsetminus M_1$ as
 in Lemma \ref{local theory} with normal connection form $\omega_{34}$.

We also consider the local orthonormal frame
 $\{e^{\theta_j}_{3}, e^{\theta_j}_{4} \}$ of the normal bundle of $f_{\theta_j}$ 
defined by 
\begin{equation*}\label{thetaframe}
e^{\theta_j}_{3}:=T_{\theta_j}(e_3), {\ } e^{\theta_j}_{4}:=T_{\theta_j}(e_4), 
\end{equation*} 
where 
$$T_{\theta_j}:Nf \to Nf_{\theta_j} $$
 is the bundle isomorphism  of Lemma \ref{isomorphism}.

We now define the smooth complex valued functions $\varphi, \psi: U_x \to \mathbb{C}$ by
\begin{equation*}
\varphi:= \sum_{j=1}^n \exp(i\theta_j)\langle e^{\theta_j}_{3}, v_j \rangle {\ }{\ } \text{and} {\ } {\ }
\psi:= \sum_{j=1}^n \exp(i\theta_j)\langle e^{\theta_j}_{4}, v_j \rangle.
\end{equation*}
These functions are locally defined and obviously depend on the choice frame $\{ e_3,e_4\}$. If for another point $\hat{x} \in M\smallsetminus M_1$, 
with corresponding
neighborhood $U_{\hat{x}}\subset M\smallsetminus M_1$ and frame $\{ \hat{e}_3,\hat{e}_4\}$ chosen as in Lemma \ref{local theory}, we have 
$U_x\cap U_{\hat{x}} \neq \varnothing$, then either $\hat{e}_3=e_3$ and $\hat{e}_4=e_4$ or $\hat{e}_3=-e_3$ and $\hat{e}_4=-e_4$ on 
$U_x\cap U_{\hat{x}}.$ The corresponding functions 
$\hat\varphi, \hat\psi: U_{\hat x} \to \mathbb{C}$ are given by
\begin{equation*}
\hat\varphi:= \sum_{j=1}^n \exp(i\theta_j)\langle \hat e^{\theta_j}_{3}, v_j \rangle {\ }{\ } \text{and} {\ } {\ }
\hat\psi:= \sum_{j=1}^n \exp(i\theta_j)\langle \hat e^{\theta_j}_{4}, v_j \rangle,
\end{equation*}
where
\begin{equation*}\label{thetaframe}
\hat e^{\theta_j}_{3}:=T_{\theta_j}(\hat e_3), {\ } \hat e^{\theta_j}_{4}:=T_{\theta_j}(\hat e_4). 
\end{equation*} 
It is obvious that $ \varphi^2 = \hat\varphi^2$ and $ \psi^2 = \hat\psi^2$ on $U_x\cap U_{\hat{x}}$.
 This means that the functions $\varphi^2, \psi^2$ are globally well-defined on $M\smallsetminus M_1$.

From (\ref{kappa-mu}) we have
\begin{equation*}
 \varphi=\dfrac{i\mu_1}{\kappa_1}\psi,
\end{equation*}
while (\ref{connection}) yields
\begin{equation*}
 \omega_{34}(\overline{E})=\dfrac{i}{\kappa_1^2-\mu_1^2}\big(  \kappa_1  \overline{E} (\mu_1) - \mu_1\overline{E}(\kappa_1) \big).
\end{equation*}
Then using (\ref{holomorphic}) and the last two equations, we easily verify that 
\begin{equation*}
\overline{E}\Big( \psi^2 \big(1-\dfrac{\mu_1^2}{\kappa_1^2}\big)  \Big) =0,
\end{equation*}
or equivalently,
\begin{equation*}
 \dfrac{\partial}{\partial \bar z}   \Big( \psi^2 \big(1-\dfrac{\mu^2}{\kappa^2}\big)  \Big) =0,
\end{equation*}
where $z$ is the local complex coordinate of  Lemma \ref{local theory}.
This shows that the function 
$$\psi^2 \big(1-\dfrac{\mu^2}{\kappa^2}\big): M\smallsetminus M_1 \to \mathbb{C}$$
 is holomorphic with isolated singularities. From the inequality
\begin{equation*}
 \Big|\psi^2 \big(1-\dfrac{\mu^2}{\kappa^2}\big)\Big| \leq \Big( \sum_{j=1}^n|v_j|  \Big)^2,
\end{equation*}
 we see that it is also bounded. Thus its singularities are removable and we end up with a 
$\mathbb{C}$-valued holomorphic function on  the compact Riemann surface $M$.  Hence there exists a constant $c \in \mathbb{C}$ such that
\begin{equation}\label{c}
\psi^2 \big(\kappa^2-\mu^2\big)=c\kappa^2 {\ }{\ } \text{on} {\ }{\ } M\smallsetminus M_1.
\end{equation} 

We will show that $c=0$. Indeed if there exists a point $x_l \in M_1$ with $\kappa(x_l)=\mu(x_l)>0,$ then taking the limit in (\ref{c}) along a sequence
of points  in $ M\smallsetminus M_1$ which converges to $x_l$ 
and using the boundedness of $\psi^2$, we deduce that $c=0$. 

Suppose now that for all points in $M_1$ the curvature ellipse degenerates into a point, i.e., 
$\kappa(x_l)=\mu(x_l)=0$ for all $l=1,\dots,k.$ In others words, all points in $M_1$ are totally geodesic points.  For each $l=1,\dots,k,$ 
let $(V,z)$ be a local
complex coordinate around $x_l$ with $z(x_l)=0.$ It is a well known consequence of the Codazzi equation that the $(2,0)$-part
 $$B^{(2,0)}=B\big(\dfrac{\partial}{\partial z},\dfrac{\partial}{\partial z} \big)dz^2$$ of the   second fundamental form of $f$ is holomorphic
as a $Nf\otimes\mathbb{C}$-valued tensor field (cf. \cite{GR} or \cite{Ru}). Since $B^{(2,0)}$  is not identically zero and $x_l$ is a zero of it, 
according to Theorem \ref{BWW}, we may write
\begin{equation}\label{m}
B^{(2,0)}=z^{m_l} B^{*(2,0)} {\ }{\ } \text{on} {\ }{\ } V
\end{equation} 
for a positive integer $m_l$, where $B^{*(2,0)}$ is a tensor field on $V$ of type $(2,0)$ with $B^{*(2,0)}|_{x_l}\neq 0$.
We now define the $Nf$-valued tensor field on $V$
\begin{equation*}
 B^*:=B^{*(2,0)}+\overline{B^{*(2,0)}}.
\end{equation*}
Since the $(1,1)$-part of $B^*$ vanishes, it follows easily that
$B^*$ maps the unit tangent circle at each tangent plane  on $V$ into an ellipse on the corresponding normal space
with  length of the semi-axes  $\kappa^* \geq \mu^*\geq 0$. 

We also consider the differential form of type $(4,0)$ 
\begin{equation*}
\Phi^*:=\langle B^{*(2,0)},B^{*(2,0)}\rangle
\end{equation*}
which, in view of (\ref{m}), is related to the Hopf differential  of $f$ by 
\begin{equation*}
\Phi=z^{2m_l}\Phi^*. 
\end{equation*}
We now consider  arbitrary orthonormal frames  $\{\xi _{1},\xi_2\}$ and $\{\xi _{3},\xi_4\}$  of $TM|V$ and $Nf|V$
respectively, that agree with the given orientations.   Then we split the form $\Phi^*$, with respect to this frame,  in the same manner we spitted
 the Hopf differential in section 2, i.e.,
\begin{equation*}
\Phi =\dfrac{1}{4}\big( {\overline{H}_{3}^{2}}+{\overline{H}%
_{4}^{2}}\big) \varphi ^{4}=\dfrac{1}{4}k_{+}k_{-}\varphi^{4} 
\end{equation*}
and  
\begin{equation*}
\Phi^* =\dfrac{1}{4}\big( {\overline{H}_{3}^{*2}}+{\overline{H}%
_{4}^{*2}}\big) \varphi ^{4}=\dfrac{1}{4}k^*_{+}k^*_{-}\varphi^{4}, 
\end{equation*}
where 
\begin{equation*}
k_{\pm}= {\overline{H}_{3}} \pm i{\overline{H}%
_{4}}, {\ }{\ }  k^*_{\pm}= {\overline{H}_{3}^*} \pm i{\overline{H}^*
_{4}}
\end{equation*}
and 
\begin{equation*}
H_{\alpha }=h_{1}^{\alpha }+ih_{2}^{\alpha },  {\ }{\ } H^*_{\alpha }=h_{1}^{*\alpha }+ih_{2}^{*\alpha }.
\end{equation*}
The components of $B$ and $B^*$ are given respectively by
 \begin{equation*}
h_{1}^{\alpha }=\langle
B(\xi_{1},\xi_{1}),\xi_{\alpha }\rangle, {\ } h_{2}^{\alpha }=\langle
B(\xi_{1},\xi_2),\xi_{\alpha }\rangle
\end{equation*}
and
\begin{equation*}
h_{1}^{*\alpha }=\langle
B^*(\xi_{1},\xi_{1}),\xi_{\alpha }\rangle, {\ } h_{2}^{*\alpha }=\langle
B^*(\xi_{1},\xi_2),\xi_{\alpha }\rangle
\end{equation*}
 for $\alpha =3,4.$ 
Then, in view of (\ref{m}), we obtain 
$\overline{H}_{\alpha }=z^{m_l}\overline{H}^*_{\alpha },$
 or 
$k_{\pm}=z^{m_l}k^*_{\pm}.$ Hence 
\begin{equation*}\label{a}
a_{\pm}=|z|^{m_l}a^*_{\pm}, 
\end{equation*} 
or equivalently,
\begin{equation}\label{semi}
\kappa\pm \mu=|z|^{m_l}(\kappa^{*}\pm \mu^{*}). 
\end{equation} 
From this we deduce that 
\begin{equation*}\label{kappamu*}
 \kappa=|z|^{m_l}\kappa^{*} {\ }{\ } \text{and} {\ }{\ } \mu=|z|^{m_l}\mu^{*} 
\end{equation*}
and (\ref{c}) now yields 
\begin{equation}\label{c*}
\psi^2 \big(\kappa^{*2}-\mu^{*2}\big)=c\kappa^{*2} {\ }{\ } \text{on} {\ }{\ } V\smallsetminus\{x_l\}. 
\end{equation}

To prove that $c=0$, we now argue in the following way. If $\kappa^{*}(x_l)>\mu^{*}(x_l)$ for all $1\leq l \leq k$, then (\ref{semi}) implies that 
\begin{equation*}
 N(a_+)=\sum_{l=1}^{k}m_l=N(a_-). 
\end{equation*}
Hence Lemma \ref{x} yields $\chi(Nf)=0$, which contradicts our assumption. 
Thus  $\kappa^{*}(x_l)=\mu^{*}(x_l)$ for some $1\leq l \leq k$. Taking the limit in (\ref{c*}),
 along a sequence of points in $V\smallsetminus\{x_l\}$ which converges to $x_l$, we obtain $c\kappa^{*2}(x_l)=0$. 
Obviously $\kappa^{*}(x_l)>0$, since $B^{*}|_{x_l}\neq 0,$ and consequently we infer that $c=0$.

From (\ref{c}), we conclude that $\psi^2=0$ everywhere on $M\smallsetminus M_1$. We note that $\omega_{34}(E)$ cannot vanish on an open subset of
 $M\smallsetminus M_1$.
Indeed, if $\omega_{34}(E)=0$ on an open subset $U\subset M\smallsetminus M_1$, then (\ref{connection}) 
would imply that the ratio $\mu_1/\kappa_1$ is constant on $U$. Using  Theorem \ref{ET0}, it is easy to see that the 
Gaussian curvature $K$ satisfies the Ricci condition 
$$\varDelta \log (1-K)=4K$$
on $U$.
 According to a result due to Sakaki \cite{S}, and bearing in mind the fact that minimal surfaces are real analytic, we infer that our minimal 
surface lies in a totally geodesic $S^3$, which contradicts the assumption on  the normal bundle.

Hence we may appeal to (\ref{basic}) and  (\ref{crucial}) to obtain
\begin{equation*}
\sum_{j=1}^n \exp(2i\theta_j)\langle  f_{\theta_j}, v_j\rangle =0.  
\end{equation*}
Combining this with (\ref{vector}), we get
\begin{equation*}\label{v1}
\sum_{j=2}^n \langle  f_{\theta_j}, w_j\rangle =0,  
\end{equation*}
where $w_j:=\lambda_jv_j\neq 0, j=2, \dots, n,$ and $\lambda_j$ is either $\cos 2\theta_n-\cos 2\theta_1$ or $\sin 2\theta_n-\sin 2\theta_1$. 
Then repeating  the same argument, we inductively   conclude at the end that $\langle  f_{\theta_n}, w\rangle =0$, for some nonzero vector $w$.
 So $f_{\theta_n}$
lies in a totally geodesic $S^3$, contradiction.

Therefore we have proved Claim 1. This means that the coordinate functions of 
$f_{\theta}, \theta \in [0,2\pi],$ are linearly independent and the proof of the theorem is complete.
\end{proof}

\begin{proof}[Proof of Corollary 1]
From Theorem 2, we know that 
$$\mathcal{S}(f)=\{\theta_0, \theta_1,\dots,\theta_n\}$$
 for some positive integer $n$ with $0=\theta_0<\theta_1<\dots <\theta_n\leq 2\pi$. So $f_{\theta_j},j=1,\dots,n,$ is the maximal
family of  noncongruent minimal surfaces in $S^4$ which are isometric to $f$ and have the same normal curvature. We consider
 the immersed minimal surfaces $f_t:=f \circ \varphi_t$. From the assumptions it follows that each $f_t$ is isometric to $f$
 and has the same normal curvature with $f$. According to Theorem 2 and since the second fundamental form of $f_t$ depends
 continuously on the parameter, we deduce that $f_t$ is congruent to exactly one $f_{\theta_j}$ for all $t$. Since $f\circ\varphi_0=f,$ we conclude that 
$f_t$ is congruent to $f$ for all $t$.
\end{proof}


\begin{thebibliography}{99}



\bibitem{A} A.C. Asperti, \textit{Immersions of surfaces into 4-dimensional spaces with nonzero normal curvature}, Ann. Mat. Pura Appl. 
\textbf{125 }(1980), 313-328.

\bibitem{AFR}   A.C. Asperti, D. Ferus and  L. Rodriguez,      \textit{Surfaces with non-zero
normal curvature tensor},
Atti Accad. Naz. Lincei {\bf  73}  (1982), 109-115.


\bibitem{B} J.L. Barbosa, \textit{On minimal immersions of }$S^{2}$\textit{\
into }$S^{2m}$, Trans. Amer. Math. Soc. \textbf{210} (1975), 75-106.

\bibitem{BWW} J. Bolton, T.J. Willmore and L.M. Woodward, \textit{Immersions of surfaces into
space forms}, Global differential geometry and global analysis 1984 (Berlin, 1984), 46-58, Lecture Notes in Math., 1156, Springer, Berlin, 1985. 




\bibitem{Ch} S.S. Chern, \textit{On the minimal immersions of the two-sphere
in a space of constant curvature}. Problems in Analysis, 27-40. Princeton:
University Press 1970.

\bibitem{ChW} S.S. Chern and J.D. Wolfson, \textit{Minimal surfaces by moving frames}, Amer. J. Math. \textbf{105} (1983), 59-83.


\bibitem{CMW} H. Choi, W. Meeks and B. White, \textit{A rigidity theorem for properly embedded minimal surfaces in $\mathbb{R}^{3}$}, 
J. Differential Geom. \textbf{32} (1990), 65-76.


\bibitem{DG} M. Dajczer and D. Gromoll, \textit{Gauss parametrizations and rigidity aspects of submanifolds}, 
J. Differential Geom. \textbf{22} (1985), 1-12.



\bibitem{EGT} J.H. Eschenburg, I.V. Guadalupe and R. Tribuzy, \textit{The
fundamental equations of minimal surfaces in} $\mathbb{C}P^{2}$, Math. Ann. 
\textbf{270} (1985), 571-598.

\bibitem{ET0} J.H. Eschenburg and R. Tribuzy, \textit{Constant mean curvature  surfaces 
 in 4-space forms}, Rend. Semin. Mat. Univ. Padova  
\textbf{79} (1988), 185-202.

\bibitem{ET} J.H. Eschenburg and R. Tribuzy, \textit{Associated families of pluriharmonic maps
and isotropy}, Manusucripta Math. \textbf{95} (1998), 295-310.

\bibitem{EQ} J.H. Eschenburg and P. Quast, \textit{The spectral parameter of pluriharmonic maps},  Bull. Lond. Math. Soc. \textbf{42} (2010), 229-236.

\bibitem{GR}  I.V. Guadalupe and L. Rodriguez, \textit{Normal curvature of  surfaces in space forms}, Pacific J. Math.  
\textbf{106} (1983), 95-103.

\bibitem{J} G.D. Johnson, \textit{An intrinsic characterization of a class
of minimal surfaces in constant curvature manifolds}, Pacific J. Math. 
\textbf{149} (1991), 113-125.



\bibitem{R} J. Ramanathan,   \textit{Rigidity of minimal surfaces in $S^3$}, Manusucripta Math. \textbf{60} (1988), 417-422.

\bibitem{Ru} E.A. Ruh, \textit{Minimal immersions of 2-spheres  in $S^4$}, 
Proc. Amer. Math. Soc. \textbf{28} (1971), 219-222. 

\bibitem{S} M. Sakaki, \textit{Minimal surfaces with the Ricci condition in 4-dimensional space forms}, 
Proc. Amer. Math. Soc. \textbf{121} (1994), 573-577. 

\bibitem{ST} B. Smyth and G. Tinaglia, \textit{The number of constant mean curvature
isometric immersions of a surface}, arXiv:0811.1231v2 [math. DG].


\bibitem{TG} R. Tribuzy and I.V. Guadalupe, \textit{Minimal immersions of surfaces into 4-dimensional
space forms}, Rend. Semin. Mat. Univ. Padova 
\textbf{73} (1985), 1-13.

\bibitem{V} Th. Vlachos, \textit{Congruence of minimal surfaces and higher
fundamental forms}, Manusucripta Math. \textbf{110} (2003), 77-91.


\end{thebibliography}
\end{document}